\numberwithin{equation}{section}
\theoremstyle{plain}
\newtheorem{theorem}{Theorem}[section]
\newtheorem{lemma}[theorem]{Lemma}
\newtheorem{conjecture}{Conjecture}
\theoremstyle{definition}
\newtheorem{remark}[theorem]{Remark}
\newcommand{\Rmnum}[1]{\expandafter\@slowromancap\romannumeral #1@}
\newcommand{\mr}{\mathbb{R}}
\newcommand{\ud}{\mathrm{d}}
\keywords{Lane-Emden equation, Q-curvature, conformal geometry}
\subjclass{Primary: 35K91,   Secondary: 53C18.}
\address{M. Li, School of Mathematics, Nanjing University, China; Department  of Mathematics, The  Chinese University of Hong Kong, Shatin, NT, Hong Kong}
\email{limx@smail.nju.edu.cn}
\address{X. Xu,   School of Mathematics, Nanjing University, China}
\email{matxuxw@nju.edu.cn}
\begin{document}
	\title{On positivity of the Q-curvatures of  conformal metrics}
	\author{Mingxiang Li,  Xingwang Xu}
	\date{}
	\maketitle
	\begin{abstract}
		We  mainly show  that  for a conformal metric  $g=u^{\frac{4}{n-2m}}|dx|^2$ on $\mathbb{R}^n$ with $n\geq 2m+1$, if the $2m-$order Q-curvature $Q^{(2m)}_g$  is positive and  has  slow decay barrier  near infinity, the lower order Q-curvature  $Q^{(2)}_g$ and $Q^{(4)}_g$ are both positive if $m$ is at least two. 
	\end{abstract}

	\section{Introduction}

	Consider a smooth conformal metric $g=u^{\frac{4}{n-2m}}|dx|^2$ on $\mr^n$ where $m$ is a positive integer satisfying
	$1\leq m<\frac{n}{2}.$ The $2m-$order Q-curvature $Q^{(2m)}_g$  is defined by the following equation:	
	\begin{equation}\label{Q^2m}
		(-\Delta)^mu=Q^{(2m)}_gu^{\frac{n+2m}{n-2m}}.
	\end{equation}
	Thus, the lower order Q-curvature  $Q^{(2k)}_g$ of the metric $g$ with $k$ ($1\leq k\leq m-1$) can be calculated through the equation
	\begin{equation}\label{Q^2k}
		(-\Delta)^{k}(u^{\frac{n-2k}{n-2m}}) = Q^{(2k)}_g u^{\frac{n+2k}{n-2m}}.
	\end{equation}
	In particular, the scalar curvature $R_g$ of $g$ is equal to $Q^{(2)}_g$.
	
	For a given real number $s \in \mr$, we say the $2m-$order Q-curvature $Q_g^{(2m)}$ has {\it slow decay barrier with rate} $s$ at infinity if there exists a constant $c_0 > 0$ such that for $|x|$ sufficiently large, the inequality $Q_g^{(2m)} \geq c_0 |x|^s$ holds true.

	Our main purpose of the current article is to show the following statement.
	\begin{theorem}\label{thm: positive Q implies positve R}
		Given a smooth conformal metric $g=u^{\frac{4}{n-2m}}|dx|^2$ on $\mr^n$ with $2\leq m<\frac{n}{2}$.
		If the positive $2m-$order Q-curvature $Q^{(2m)}_g$  has slow decay barrier with rate  $-2m<s\leq 0$ at infinity, then the scalar curvature $R_g$ and $4-$order  Q-curvature $Q_g^{(4)}$ of the metric $g$ are both positive. 
	\end{theorem}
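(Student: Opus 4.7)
My plan is to extract a Riesz integral representation of $u$ from the polyharmonic equation, use it to establish a sharp pointwise Cauchy--Schwarz gradient inequality, and then verify that this inequality forces the sign of $-\Delta(u^{\alpha})$ and $\Delta^{2}(u^{\beta})$ for $\alpha=(n-2)/(n-2m)$ and $\beta=(n-4)/(n-2m)$.

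First I would show $u(x)\to 0$ as $|x|\to\infty$. The slow-decay assumption $Q_g^{(2m)}(y)\ge c_0|y|^{s}$ with $-2m<s\le 0$ implies that if $u$ stayed bounded below at infinity, then $\int Q_g^{(2m)}(y)u(y)^{(n+2m)/(n-2m)}|y|^{2m-n}\,dy=+\infty$, which is incompatible with any Riesz representation of a positive super-$(-\Delta)^{m}$ function. Once $u\to 0$, iterating the maximum principle on $(-\Delta)^{m-1}u,\ldots,(-\Delta)u$ yields the super-polyharmonic property $(-\Delta)^{j}u>0$ for $0\le j\le m-1$, together with the representation
\begin{equation*}
(-\Delta)^{j}u(x)=a_{m-j}\int_{\mr^{n}}\frac{Q_{g}^{(2m)}(y)\,u(y)^{\frac{n+2m}{n-2m}}}{|x-y|^{n-2(m-j)}}\,dy,
\end{equation*}
where $a_{k}$ denotes the Riesz normalization of order $2k$, satisfying the clean ratio $a_{k}/a_{k-1}=[2(n-2k)(k-1)]^{-1}$. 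Differentiating the $j=0$ formula under the integral and applying Cauchy--Schwarz to the kernel factorization $|x-y|^{2m-n-1}=|x-y|^{(2m-n)/2}\cdot|x-y|^{(2m-n-2)/2}$ yields the pointwise inequality
\begin{equation*}
|\nabla u(x)|^{2}<\frac{n-2m}{2(m-1)}\,u(x)(-\Delta u)(x),
\end{equation*}
with strict inequality because the positive density $Q_{g}^{(2m)}u^{(n+2m)/(n-2m)}$ cannot concentrate on any sphere about $x$.

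For $R_{g}$, writing $R_{g}\,u^{(n+2)/(n-2m)}=c_{n}[-\Delta(u^{\alpha})]$ and expanding
\begin{equation*}
-\Delta(u^{\alpha})=\alpha u^{\alpha-2}\bigl[u(-\Delta u)-(\alpha-1)|\nabla u|^{2}\bigr],\qquad \alpha-1=\tfrac{2(m-1)}{n-2m},
\end{equation*}
the Cauchy--Schwarz constant is \emph{exactly} critical, so the bracket is strictly positive everywhere and $R_{g}>0$. For $Q_{g}^{(4)}$ the case $m=2$ is trivial (there $\beta=1$ and $Q_{g}^{(4)}\equiv Q_{g}^{(2m)}$); for $m\ge 3$ one first deduces $-\Delta(u^{\beta})>0$ from the same gradient bound, since $(\beta-1)\cdot\tfrac{n-2m}{2(m-1)}=\tfrac{m-2}{m-1}<1$. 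To upgrade this to $\Delta^{2}(u^{\beta})>0$, I would fully expand $\Delta^{2}(u^{\beta})$ as a polynomial in $u$, $\nabla u$, $\Delta u$, $\nabla\Delta u$, $\Delta^{2}u$ and $|\nabla^{2}u|^{2}$, and absorb each negative cross-term using analogous pointwise bounds such as
\begin{equation*}
|\nabla(-\Delta u)|^{2}\le\tfrac{n-2m+2}{2(m-2)}(-\Delta u)(-\Delta)^{2}u,\qquad |\nabla^{2}u|^{2}\le C\,u\,(-\Delta)^{2}u,
\end{equation*}
derived from the Riesz representations of $u$ and $-\Delta u$ by the same kernel factorization trick.

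The principal obstacle is the algebraic bookkeeping in this last step: the expansion of $\Delta^{2}(u^{\beta})$ contains many mixed-derivative terms of varying sign, and the argument succeeds only if the Cauchy--Schwarz constants assemble (exactly as in the $R_{g}$ step) into a critical or sub-critical inequality at every level of the expansion. Showing that the cascade $a_{k}/a_{k-1}=[2(n-2k)(k-1)]^{-1}$ continues to deliver the right constants, and that the resulting algebraic identity has the right global sign for all admissible $m\ge 3$, is the technical heart of the proof.
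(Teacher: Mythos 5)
Your treatment of $R_g$ is sound and is essentially the paper's argument in different clothing: the Cauchy--Schwarz bound $|\nabla u|^2\le\frac{n-2m}{2(m-1)}u(-\Delta u)$ obtained from the Riesz representation is exactly the statement that
\[
A(u)(x)=\int\int\frac{|y-z|^2}{|x-y|^{n-2m+2}|x-z|^{n-2m+2}}\,\ud\mu(y)\,\ud\mu(z)\geq 0,
\qquad \ud\mu=\tfrac{1}{C(n,m)}Qu^{p}\,\ud y,
\]
and the paper computes this deficit exactly, reading off $R_g=4(n-1)(m-1)u^{\frac{4m-4-2n}{n-2m}}A(u)>0$. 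The representation itself can be taken from Ng\^o--Ye or proved as the paper does (growth estimates plus Pizzetti's formula), so Step 1 is complete, as is the trivial case $m=2$ of the $Q_g^{(4)}$ claim.

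The gap is the $Q_g^{(4)}$ step for $m\ge 3$, which you explicitly defer as ``algebraic bookkeeping''; it is not bookkeeping, and the strategy you describe would not close. When one expands $\Delta^2(u^{\beta})$ and substitutes the exact integral identities, the contributions proportional to $u\Delta^2u$, $(\Delta u)^2$, $A(u)\Delta u$ and to the mixed integral $A_1$ all cancel, and what survives is $(m-2)(n-4)u^{-\frac{n+4}{n-2m}}B(u)$ with
\[
B(u)=(2m-2-l)(m-l)\,u^{t-4}A(u)^2+l(2m-2-l)\,u^{t-3}A_2+l^2\,u^{t-2}A_3,\qquad l=n-2m+2,
\]
where $A_2,A_3$ are three- and two-fold integrals against $\ud\mu$ that are \emph{not} functions of the pointwise derivatives of $u$. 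The coefficients of $A(u)^2$ and $A_2$ change sign as $l$ crosses $m$ and $2m-2$, so no term-by-term absorption by ``diagonal'' bounds such as $|\nabla\Delta u|^2\le\frac{n-2m+2}{2(m-2)}(-\Delta u)\Delta^2u$ or $|\nabla^2u|^2\le C\,u\,\Delta^2u$ can work uniformly in $(n,m)$. What is actually required are the two off-diagonal quadratic-form inequalities $u^2A_3\ge A(u)^2$ and $u^2A_3+A(u)^2\ge uA_2$, which the paper proves by exhibiting the differences as integrals of explicit perfect squares over $(\mr^n)^4$, followed by a three-case analysis in $l$. Nothing in your proposed toolbox produces the second of these, and the unspecified constant $C$ in your Hessian bound is a symptom: the argument is constant-critical at every stage, and the missing identities are precisely the content of the theorem.
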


	Due to our technical limitation, we are unable to derive more. However, many evidences make us to believe that all other lower order Q-curvatures are positive and we state it as a conjecture.
	
	\begin{conjecture}\label{conj}
		Given  a smooth conformal metric $g=u^{\frac{4}{n-2m}}|dx|^2$ on $\mr^n$ with $2\leq m<\frac{n}{2} $.
		If  positive $2m-$order Q-curvature $Q^{(2m)}_g$ has slow decay barrier with the rate $-2m < s \leq 0$ at infinity,
		then,  as long as $1\leq k\leq m-1$,  there holds
		$$Q^{(2k)}_g> 0.$$
	\end{conjecture}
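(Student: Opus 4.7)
The plan is to extend the strategy of Theorem~\ref{thm: positive Q implies positve R} from $k=1,2$ to all $1\leq k\leq m-1$. The starting point is the super-polyharmonic property of $u$ --- namely $(-\Delta)^j u>0$ on $\mr^n$ for every $1\leq j\leq m-1$ --- which I expect to follow from the slow decay hypothesis by the iterative maximum-principle argument that already underlies Theorem~\ref{thm: positive Q implies positve R}. Combined with standard growth control, this yields the Riesz potential representation
\begin{equation*}
u(x)=c_{n,m}\int_{\mr^n}|x-y|^{2m-n}\,f(y)\,\ud y,\qquad f:=Q^{(2m)}_g\,u^{\frac{n+2m}{n-2m}}>0,
\end{equation*}
and hence exhibits every partial derivative of $u$ as an integral of $f$ against an explicit kernel; in particular $(-\Delta)^j u(x)=c_{n,m,j}\int|x-y|^{2m-n-2j}f(y)\,\ud y$ with $c_{n,m,j}>0$ for $0\leq j\leq m-1$.

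Next, for each fixed $k\in\{1,\dots,m-1\}$, expand $(-\Delta)^k(u^{\alpha_k})$ with $\alpha_k=(n-2k)/(n-2m)$ by iterating the chain rule and Bochner's identity $\Delta|\nabla u|^2=2|\nabla^2u|^2+2\nabla u\cdot\nabla\Delta u$. The result is a sum, indexed by partitions, of monomials
\begin{equation*}
u^{\alpha_k-N}\prod_{j}(\partial^{\gamma_j}u)^{e_j}
\end{equation*}
with universal rational coefficients; the leading (manifestly positive) monomial is $\alpha_k\,u^{\alpha_k-1}(-\Delta)^k u$. For each remaining monomial, substitute the integral representations above and control its sign via Cauchy--Schwarz,
\begin{equation*}
\Bigl(\int|x-y|^{a+b}f(y)\,\ud y\Bigr)^{2}\leq\Bigl(\int|x-y|^{2a}f(y)\,\ud y\Bigr)\Bigl(\int|x-y|^{2b}f(y)\,\ud y\Bigr),
\end{equation*}
with strict inequality because $f>0$ and the kernels are non-proportional. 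For $k=1$ the scheme collapses to the single inequality $|\nabla u|^2<\frac{n-2m}{2(m-1)}\,u(-\Delta u)$, giving $R_g>0$; for $k=2$ it amounts to a finite list of such estimates controlling the monomial types $(\Delta u)^2$, $|\nabla^2 u|^2$, $|\nabla u|^4$, $|\nabla u|^2\Delta u$, $\langle\nabla^2u\cdot\nabla u,\nabla u\rangle$ and $\nabla u\cdot\nabla\Delta u$, yielding $Q^{(4)}_g>0$; these two cases are the content of Theorem~\ref{thm: positive Q implies positve R}.

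The principal obstacle is the combinatorial bookkeeping for general $k$: the number of distinct monomial types grows with the partition function $p(k)$, and one must verify that the Fa\`a di Bruno numerical coefficients match up \emph{exactly} with the sharp constants appearing in the Cauchy--Schwarz inequalities above. A promising route is an induction on $k$ using the conformal identity $v_{k+1}=v_k\,u^{-2/(n-2m)}$ together with a Leibniz expansion for iterated Laplacians, reducing positivity at level $k+1$ to that at level $k$ plus a controllable correction. Alternatively, one could seek a direct Riesz-type representation of $u^{\alpha_k}$ itself as a $2k$-th order potential of an explicit positive density, in which case $(-\Delta)^k u^{\alpha_k}>0$ would be immediate; either path, however, calls for an algebraic identity not currently in the literature, which is precisely why the argument is presently stuck beyond $k=2$.
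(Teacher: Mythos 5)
The statement you are addressing is presented in the paper as an open conjecture, and what you have written is a research plan rather than a proof: by your own admission the argument ``is presently stuck beyond $k=2$.'' The missing step is precisely the one the conjecture turns on, namely a positivity mechanism for the full expansion of $(-\Delta)^k\bigl(u^{(n-2k)/(n-2m)}\bigr)$ when $k\geq 3$. Moreover, term-by-term Cauchy--Schwarz on individual monomials, as you propose, already does not suffice at $k=2$: in the paper's notation the relevant quantity is $B(u)=(2m-2-l)(m-l)u^{t-4}A(u)^2+l(2m-2-l)u^{t-3}A_2+l^2u^{t-2}A_3$ with $l=n-2m+2$ (see \eqref{B(u)}), whose coefficients change sign as $n$ and $m$ vary. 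Positivity is obtained only by regrouping the terms in three separate regimes of $l$ and invoking sum-of-squares identities for the specific combinations $u^2A_3-A(u)^2$ and $u^2A_3+A(u)^2-uA_2$ (Lemmas \ref{lem: A(u)^2-uA_3} and \ref{lem:P_1/P_2}), in which the quadrilinear integrand is exhibited as an explicit perfect square in four integration variables. For general $k$ one would need a whole family of such algebraic identities, matched exactly to the Fa\`a di Bruno coefficients you mention; you have not supplied these, and producing them is the content of the conjecture, not a routine verification.

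A secondary inaccuracy in your starting point: the integral representation $u(x)=c_{n,m}\int|x-y|^{2m-n}f(y)\,\ud y$ is indeed available (Theorem \ref{thm: u=v}), but it is not obtained from super-polyharmonicity via an iterated maximum principle --- the introduction explicitly notes that the spherical-average contradiction argument behind $(-\Delta)^ju>0$ does not extend to non-constant $Q$-curvature. The paper instead derives the capacity-type bound $\int_{B_R}Qu^p\leq CR^{n-2m-\frac{2m+s}{p-1}}$ from the slow decay hypothesis (Lemma \ref{lem: B_R Qup}), shows that $u-v$ is a polyharmonic function with $L^1$-growth $o(R^n)$, and kills it with Pizzetti's formula. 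The representation is therefore the sound part of your outline, provided you justify it this way; but the positivity of the lower-order $Q$-curvatures for $3\leq k\leq m-1$ remains entirely open, so the proposal does not establish the statement.
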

	\begin{remark}
		Our main Theorem \ref{thm: positive Q implies positve R} stated above just confirms the conjecture for $m=2,3.$
	\end{remark}

	Now it is right place to recall some reasons to study such a problem. First of all, for $m=1$, the equation \eqref{Q^2m} is known as the prescribing scalar curvature equation and many research works have been done, just list a few, \cite{CGS},  \cite{GNN}, \cite{GS1}, \cite{GS2}. For $m\geq 2$, from the point view of differential equations,  this is just the higher order semi-linear elliptic type equations which has many applications in physic such as membrane etc.. However, its geometric prescription is relatively new, initiated by Paneitz who derived a fourth order conformal covariant operator. And Q-curvature also appears in the log determinant for Laplace operator under conformal deformation. Due to those geometric or physical properties, it stimulates its active study in recent years.  A lot of  works made the subject very promising including \cite{JX}, \cite{Lin}, \cite{WX}, \cite{Xu} etc.
	
	From analytic perspective, the equation \eqref{Q^2m} is known as  Lane-Emden equation if $Q^{(2m)}_g=1$. By now, Lane-Emden equation has been well studied and many properties of the solution have been derived, the interesting readers are referred to \cite{FWX},  \cite{Lin}, \cite{NY}, \cite{Scouplet},  \cite{WX}  and the references therein. The key breakthrough of the subject is to show that the Lane-Emden equation itself implies that $(-\Delta)^k u > 0$ for all $1 \leq k \leq m-1$ if $u > 0$. The argument for this is based on the spherical average growth estimate by the contradiction argument which cannot extend to the non-constant Q-curvature. Thus it is natural to extend it to other cases so that the maximum principle works for the general higher order equations.
	
	Along this direction, if $(M^n,g)$ is compact of dimension $n\geq 5$, Gursky and Malchiodi \cite{GM} showed that the strong maximum principle holds for Paneitz-Branson operator if  Q-curvature and scalar curvature are both positive for $m =2$.  Based on such a strong maximum principle,  the existence of conformal metric with constant Q-curvature follows. Similar result is also obtained by Hang and Yang \cite{HY}.  More details about Q-curvature and Paneitz-Branson operator can be found in \cite{Bre}, \cite{CHY}, \cite{FG}, \cite{GM}, \cite{HY}  and the references therein.
	
	We try to understand the non-compact case with much simpler topology, namely, work on $\mr^n$ with general conformal metric. When $m = 2$ and fourth Q-curvature $Q_g^{(4)} = |x|^a$, it has been discussed in recent article \cite{FWX} by using a very technical iteration argument and it is very hard to see if it can be generalized to bigger $m$ case. In some sense, our main result above is the first step to reach the desired estimate. 
	
	Before we explain our method, let us briefly recall what we already had in existing literatures. As just mentioned, in \cite{FWX}, Fazly, Wei and the second author provide a desired estimate for $m=2$ case with some special Q-curvature by development of a Moser type iteration technique.   For higher order cases ($m\geq 3$),  Ng\^o and Ye \cite{NY} treated the equation \eqref{Q^2m} for special Q-curvature $Q^{(2m)}_g=|x|^\sigma$ with $\sigma>-2m$ by observing that the potential theory can be applied. With this observation, they showed that the analytical property of solution indeed still valid, i.e. 
	$(-\Delta)^ku>0$ where $1\leq k\leq m-1.$ A similar result is also obtained in \cite{AGHW}. However the geometric information is still missing.

	Now we would like to explain the slow decay barrier condition briefly. From analytic perspective, for $Q^{(2m)}_g=|x|^{-2m}$,  non-existence of the positive solutions to  equation \eqref{Q^2m} is established in \cite{FWX}, \cite{NY} and \cite{Scouplet}. Of course, if $Q^{(2m)}_g\equiv 0$, the solution space is too large and it is hard to get useful geometric information except trivial statement that they are all $m$th-polyharmonic functions. The slow decay barrier condition is used to get rid of this trivial case and try to get the geometric information of the conformal metric to some degree.
	
	The main idea is to show that, under the slow decay barrier assumption, the potential theory for the equation is still true so that we can transfer the differential equation to integral equation with standard fundamental solution for $(-\Delta)^m$ with non-linear measure $Q_g^{(2m)} u^{\frac{n+2m}{n-2m}}dx$. This measure is good enough for us to take derivatives under the integral sign. Thus the work is to show that suitable integrals can be used to express the scalar curvature as well as the $4-$order  Q-curvature $Q_g^{(4)}$ with coefficients in terms of $n$ and $m$ and then we can check those coefficients are non-negative in different cases in terms of relations between $m$ and $n$. 
	
	This paper is organized as follows. In Section \ref{section2}, we give some necessary growth estimate for a non-negative solution $u$ and obtain an integral representation. With help of such integral estimate,  some useful identities are established in Section \ref{section3}. Finally, in Section \ref{section4}, we provide the detailed computations and complete the proof of our claim.
	
	\medskip
	{\bf Acknowledgements.}  The first author would like to thank Professor Juncheng Wei and Professor Dong Ye for helpful discussions. Both authors are  supported by NSFC (No.12171231). We deeply appreciate the valuable comments and suggestions provided by the anonymous referees. 
	
	\section{Integral representation}\label{section2}
	
	We consider a slightly more general case. Suppose that 
	a positive function $u$  satisfies the equation
	\begin{equation}\label{equ:Lane-Emden equ}
		(-\Delta)^mu=Q u^p,\quad \mathrm{in}\;\;\mr^n
	\end{equation}
	where $m$ is an integer satisfying $2\leq m<\frac{n}{2}$, $p>1$,   $Q(x)$ is a given positive function satisfying the slow decay barrier condition at infinity with the rate $$-2m< s \leq 0.$$  We assume that both $Q$ and $u$ belong to $ L^\infty_{loc}(\mr^n)$ and
	\begin{equation}\label{range p}
		p>\frac{n+s}{n-2m}.
	\end{equation}
	In particular,    $p=\frac{n+2m}{n-2m}$  satisfies  the condition \eqref{range p}.
	
	The following lemmas in this section state and prove some useful properties for non-negative solutions of  above equation \eqref{equ:Lane-Emden equ}. For simplicity, we refer $B_R(x)$ as a Euclidean ball in $\mr^n$ with radius $R$ and  center  at  point $x\in \mr^n$. A ball with radius $R$ centered  at origin is simply denoted by $B_R$. Let $C$ be a constant which may be different from line to line.	  The first property can be stated as follows. 
	
	\begin{lemma}\label{lem: B_R Qup}
		For $R\gg1$, there holds
		$$\int_{B_R}Qu^p\ud x\leq CR^{n-2m-\frac{2m+s}{p-1}}.$$
	\end{lemma}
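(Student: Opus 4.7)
\medskip
\textbf{Proof plan.} My approach is a standard cutoff-and-H\"older estimate, with one nontrivial point being a fractional-power choice of test function.

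Fix a smooth cutoff $\eta \in C_c^\infty(\mr^n)$ with $0 \leq \eta \leq 1$, $\eta \equiv 1$ on $B_R$, $\operatorname{supp}\eta \subset B_{2R}$, and $|\partial^k \eta| \leq C R^{-k}$ for $0 \leq k \leq 2m$. Write $p' := p/(p-1)$ for the H\"older conjugate exponent. The plan is to test the equation \eqref{equ:Lane-Emden equ} against $\varphi := \eta^{2m p'}$: this precise power is the one that will balance the subsequent H\"older inequality.

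After $2m$ integrations by parts (no boundary contribution since $\varphi$ is compactly supported),
\begin{equation*}
I := \int Q u^p \eta^{2m p'} \,\ud x = \int u \cdot (-\Delta)^m \eta^{2m p'} \,\ud x.
\end{equation*}
By Leibniz and the cutoff bounds, and using $\eta \leq 1$ together with the algebraic identity $2m p' - 2m = 2m p'/p$, every term in $(-\Delta)^m \eta^{2m p'}$ carries at least $\eta^{2mp'/p}$ and a factor $R^{-2m}$, so
\begin{equation*}
\bigl|(-\Delta)^m \eta^{2m p'}\bigr| \leq C R^{-2m} \eta^{2m p'/p}.
\end{equation*}
Applying H\"older's inequality with exponents $(p,p')$ to the factorization $u \eta^{2mp'/p} = \bigl(u Q^{1/p} \eta^{2mp'/p}\bigr) \cdot Q^{-1/p}$,
\begin{equation*}
\int u \,\eta^{2m p'/p} \,\ud x \leq I^{1/p} \left( \int_{B_{2R}} Q^{-1/(p-1)} \,\ud x \right)^{1/p'} =: I^{1/p} J^{1/p'}.
\end{equation*}
Assembling these inequalities gives $I \leq C R^{-2m} I^{1/p} J^{1/p'}$, and absorbing the $I^{1/p}$ factor and raising to the $p'$-th power yields $I \leq C R^{-2m p'} J$.

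It remains to bound $J$. The slow decay $Q(x) \geq c_0 |x|^s$ for $|x|$ large, together with the local positivity of the (smooth) $Q$, gives $Q^{-1/(p-1)} \leq C \bigl(1 + |x|^{-s/(p-1)}\bigr)$ on $\mr^n$. Since $s \leq 0$, the exponent $-s/(p-1)$ is non-negative, so integration over $B_{2R}$ immediately gives $J \leq C R^{n - s/(p-1)}$ for $R \gg 1$. Combining this with $I \leq C R^{-2m p'} J$ and using the elementary identity
\begin{equation*}
n - 2m p' - \frac{s}{p-1} = n - 2m - \frac{2m+s}{p-1}
\end{equation*}
produces the target bound on $I$, which dominates $\int_{B_R} Q u^p \,\ud x$.

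The main obstacle is the sharp choice of the cutoff power: a naive test function such as $\eta^{2m}$ would not balance the two sides of H\"older, and the fractional power $\eta^{2mp'}$ is forced by the requirement that the residual weight from $(-\Delta)^m \eta^{2mp'}$ match the exponent $p$ appearing in $Q u^p$. The admissibility condition $p \geq (n+s)/(n-2m)$ is equivalent to $n - 2m - (2m+s)/(p-1) \geq 0$, so the resulting exponent is non-negative, making the estimate meaningful as $R \to \infty$; all remaining work is bookkeeping of exponents.
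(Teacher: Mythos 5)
Your argument is essentially the paper's own proof: the same test function $\eta^{2mp/(p-1)}$ with the same derivative bound $|(-\Delta)^m\eta^{2mp'}|\le CR^{-2m}\eta^{2mp'/p}$, the same H\"older step with exponents $(p,p')$, and the same exponent bookkeeping, so the conclusion is correct. The one point to tighten: you bound $J=\int_{B_{2R}}Q^{-1/(p-1)}\,\ud x$ by invoking a positive lower bound for $Q$ near the origin, but in the general setting of this section $Q$ is only assumed positive and $L^\infty_{loc}$, which does not give $Q^{-1/(p-1)}\in L^1_{loc}$; the clean fix (and what the paper does) is to note that $(-\Delta)^m\eta^{2mp'}$ is supported in the annulus $B_{2R}\setminus B_R$, so the H\"older inequality need only be applied there, where $R\gg1$ makes the slow-decay bound $Q\ge c_0|x|^s$ available and yields $\int_{B_{2R}\setminus B_R}Q^{-1/(p-1)}\,\ud x\le CR^{n-s/(p-1)}$ with no assumption on $Q$ near the origin.
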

	\begin{proof}
		Choose a smooth cut-off function $\psi$ satisfying $\psi\equiv 1$ in $B_1$ and vanishes outside $B_2$.
		Set
		$$\phi_R(x)=\psi(\frac{x}{R})^q$$
		where $q=\frac{2mp}{p-1}$. A direct computation yields that
		\begin{equation}\label{Dela^m phi_R}
			|\Delta^m\phi_R(x)|\leq   CR^{-2m}\|\psi(x) \|_{C^{2m}(B_2(0))}[\psi(\frac{x}{R})]^{q-2m}\leq CR^{-2m}\phi_R^{\frac{1}{p}}(x).
		\end{equation}
		Making use of the equation \eqref{equ:Lane-Emden equ}, the estimate \eqref{Dela^m phi_R} and integration by parts, one has 
		\begin{align*}
			\int_{\mr^n}Qu^p\psi_R\ud x=&\int_{\mr^n}u(-\Delta)^m\phi_R\ud x\\
			\leq &\int_{B_{2R}\backslash B_R}u|(-\Delta)^m\phi_R|\ud x\\
			\leq &CR^{-2m}\int_{B_{2R}\backslash B_R}u\phi_R^{\frac{1}{p}}\ud x.
		\end{align*}
		On the other hand, by the slow decay barrier assumption $Q(x)\geq C|x|^s$ near infinity, for $R\gg1$, there holds
		\begin{align*}
			\int_{B_{2R}\backslash B_R}u\phi_R^{\frac{1}{p}}\ud x	\leq &CR^{-\frac{s}{p}}\int_{B_{2R}\backslash B_R}Q^{\frac{1}{p}}u\phi_R^{\frac{1}{p}}\ud x\\
			\leq & CR^{n(1-\frac{1}{p})-\frac{s}{p}}\left(\int_{B_{2R}\backslash B_R}Qu^p\phi_R\ud x\right)^{\frac{1}{p}}\\
			\leq & CR^{n(1-\frac{1}{p})-\frac{s}{p}}\left(\int_{\mr^n}Qu^p\phi_R\ud x\right)^{\frac{1}{p}}.
		\end{align*}
		Combining these two estimates, one has
		$$\int_{\mr^n}Qu^p\psi_R\ud x
		\leq CR^{n-2m-\frac{2m+s}{p-1}}.$$
		Finally, using the facts $\psi_R=1$ on $B_R$ as well as both $Q$ and  $u$ are positive, we obtain the estimate  
		$$
		\int_{B_R}Qu^p\ud x\leq \int_{\mr^n}Qu^p\psi_R\ud x\leq CR^{n-2m-\frac{2m+s}{p-1}}
		$$
		which is the desired one we claimed. 
	\end{proof}
	
	Based on the growth of $Q u^p$ over $B_R$ as stated in Lemma \ref{lem: B_R Qup}, we are able to show that $Q u^p$  convoluted with  a suitable  power of $|x-y|$ belongs to $L^1$ which play a crucial role throughout this paper. In fact, this property ensures the exchange of  differentiation and integration.  Nevertheless, we have the following lemma.
	
	\begin{lemma}\label{lem:|y|^n-2m+kQu^p in L1}
		For any integer $0\leq k<2m$ and $x\in\mr^n$ fixed, 
		\begin{enumerate}
			\item  there holds 	$$\int_{\mr^n}\frac{Q(y)u^p(y)}{|x-y|^{n-2m+k}}\ud y<+\infty, $$
			\item for $R\gg2|x|+1$, there holds
			$$\int_{\mr^n\backslash B_R}\frac{Q(y)u^p(y)}{|x-y|^{n-2m+k}}\ud y\leq CR^{-k-\frac{2m+s}{p-1}}.$$
		\end{enumerate}
	\end{lemma}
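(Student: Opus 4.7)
The plan is to reduce both statements to a dyadic decomposition of the complement of a ball, combined with the growth bound from Lemma \ref{lem: B_R Qup} on each dyadic annulus, after separately handling the (integrable) singularity of the kernel at $y = x$.

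For part (1), I would fix any $R_0 > 2|x|$ and split the integral into $\int_{B_{R_0}(0)} + \int_{\mathbb{R}^n \setminus B_{R_0}(0)}$. The inner piece is finite because $Q$ and $u$ lie in $L^\infty_{loc}$, while the kernel $|x-y|^{-(n-2m+k)}$ has singularity exponent strictly less than $n$ (here is the only place the assumption $k < 2m$ enters) and so is integrable on any bounded set. For the outer piece I would decompose $\mathbb{R}^n \setminus B_{R_0}(0)$ into dyadic annuli $A_j = B_{2^{j+1} R_0}(0) \setminus B_{2^j R_0}(0)$ for $j \geq 0$. On $A_j$ the choice of $R_0$ yields $|x-y| \geq |y| - |x| \geq 2^{j-1} R_0$, so the kernel is pointwise bounded by $C (2^j R_0)^{-(n-2m+k)}$, while Lemma \ref{lem: B_R Qup} gives $\int_{A_j} Q u^p \leq \int_{B_{2^{j+1}R_0}(0)} Q u^p \leq C (2^{j+1} R_0)^{n - 2m - \frac{2m+s}{p-1}}$. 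Multiplying these two bounds, the contribution of $A_j$ is at most $C\, 2^{-j(k + \frac{2m+s}{p-1})} R_0^{-k - \frac{2m+s}{p-1}}$.

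Summing this geometric series requires $k + \frac{2m+s}{p-1} > 0$. This is where the slow decay condition $s > -2m$ is essential: combined with $p > 1$ (so $p - 1 > 0$), it gives $\frac{2m+s}{p-1} > 0$, and since $k \geq 0$ the sum is strictly positive. Thus the series converges and part (1) follows. For part (2), I would run the same dyadic decomposition but starting at radius $R$ itself instead of $R_0$ (the hypothesis $R \gg 2|x| + 1$ ensures $R > 2|x|$ so that the estimate $|x-y| \geq |y|/2 \geq 2^{j-1}R$ on $A_j$ is still valid); summing over $j \geq 0$ then produces exactly the bound $C R^{-k - \frac{2m+s}{p-1}}$ claimed.

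There is no genuine obstacle: the argument is structurally routine. The only subtlety worth emphasizing is the strict positivity of the exponent $k + \frac{2m+s}{p-1}$, which is what makes both the dyadic series converge and the tail in (2) decay. The strict inequality $s > -2m$ in the slow decay hypothesis is needed precisely for the $k = 0$ case; the endpoint $s = -2m$ would correspond to the nonexistence scenario alluded to in the introduction.
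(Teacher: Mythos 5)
Your proposal is correct and follows essentially the same route as the paper: the authors also split off the bounded region near $x$ (using $n-2m+k<n$ and local boundedness), cover the complement by dyadic annuli on which $|x-y|\geq |y|/2$, apply Lemma \ref{lem: B_R Qup} annulus by annulus, and sum the geometric series $\sum_i 2^{-i(k+\frac{2m+s}{p-1})}$, whose convergence rests on exactly the strict inequality $s>-2m$ you single out. No further comment is needed.
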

	\begin{proof}
		Making use of  Lemma \ref{lem: B_R Qup}, there exists $R_1>0$ such that
		for any $R\geq R_1$,  one has 
		\begin{equation}\label{equ: 2^k+1-2^k}
			\int_{B_{2R}\backslash B_R}Q(y)u^p(y)\ud y\leq CR^{n-2m-\frac{2m+s}{p-1}}.
		\end{equation}
	Making use of such estimate and choosing $R>\max\{R_1,2|x|\}$, one obtains the following estimate
			\begin{equation}\label{equ: 2.5}
		\int_{B_{2R}\backslash B_R}\frac{Q(y)u^p(y)}{|x-y|^{n-2m+k}}\ud y\leq CR^{-k-\frac{2m+s}{p-1}}.
	\end{equation}
As a consequence, one has
		\begin{align*}
			&\int_{\mr^n\backslash B_R}\frac{Q(y)u^p(y)}{|x-y|^{n-2m+k}}\ud y\\
			\leq &\sum^\infty_{i=0}\int_{B_{2^{i+1}R}\backslash B_{2^iR}}\frac{Q(y)u^p(y)}{|x-y|^{n-2m+k}}\ud y\\
			\leq &C\sum^\infty_{i=0}(2^iR)^{-k-\frac{2m+s}{p-1}}\\
			\leq &CR^{-k-\frac{2m+s}{p-1}}.
		\end{align*}
		
		Since $n-2m+k<n$ as well as  both $Q$ and $u$ belonging to $L^\infty_{loc}$, it is not hard to verify that 
		$$\int_{B_R}\frac{Q(y)u^p(y)}{|x-y|^{n-2m+k}}\ud y< +\infty.$$
		Combining these two estimates, one has 
		$$\int_{\mr^n}\frac{Q(y)u^p(y)}{|x-y|^{n-2m+k}}\ud y<+\infty.$$
		
		Thus the  proof of this lemma is complete.
	\end{proof}
	
	Now we are in position to state and prove one important property for the solution $u$, namely we have the following lemma.

	\begin{lemma}\label{lem: u L^1 growth}
		For $R\gg1$, there holds
		$$\int_{B_R}u(x)\ud x=o(R^n)$$
		
	\end{lemma}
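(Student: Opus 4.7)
My plan is to control $\int_{B_R}u^p\,\ud x$ via Lemma \ref{lem: B_R Qup} together with the slow-decay lower bound on $Q$, and then pass from $u^p$ back to $u$ by H\"older's inequality.

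First, by the slow decay assumption there is an $R_0>0$ such that $Q(y)\geq c_0|y|^s$ for all $|y|\geq R_0$. Since $s\leq 0$ and $|y|\leq R$ on $B_R$, on the annulus $B_R\setminus B_{R_0}$ I have the pointwise bound
$$u^p(y)\,\leq\,c_0^{-1}|y|^{-s}\,Q(y)u^p(y)\,\leq\,c_0^{-1}R^{-s}\,Q(y)u^p(y).$$
Integrating and absorbing the contribution from $B_{R_0}$ into a constant (using $u\in L^\infty_{loc}$), Lemma \ref{lem: B_R Qup} then yields
$$\int_{B_R}u^p\,\ud x\,\leq\,CR^{-s}\int_{B_R}Qu^p\,\ud x+C\,\leq\,CR^{n-2m-s-\frac{2m+s}{p-1}}+C$$
for $R\gg 1$.

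Next, by H\"older's inequality with conjugate exponents $p$ and $p/(p-1)$,
$$\int_{B_R}u\,\ud x\,\leq\,\Bigl(\int_{B_R}u^p\,\ud x\Bigr)^{1/p}|B_R|^{1-1/p}\,\leq\,CR^\alpha,$$
where in the generic case (the $R$-growing term above dominates) one computes
$$\alpha\,=\,\frac{1}{p}\left[np-2m-s-\frac{2m+s}{p-1}\right].$$
A direct rearrangement shows $\alpha<n$ is equivalent to $(2m+s)\cdot\tfrac{p}{p-1}>0$, which holds under the hypotheses $s>-2m$ and $p>1$. In the degenerate case when the constant term dominates, $\alpha=n(p-1)/p<n$ holds trivially. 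Either way $\int_{B_R}u\,\ud x\leq CR^\alpha$ with $\alpha<n$, which is strictly stronger than the required $o(R^n)$.

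The only real obstacle is keeping the arithmetic of the exponents straight; the strict inequality $\alpha<n$ relies crucially on the endpoint $s=-2m$ being excluded, in line with the nonexistence results for $Q^{(2m)}_g=|x|^{-2m}$ quoted in the introduction. Beyond this bookkeeping, everything is an immediate consequence of Lemma \ref{lem: B_R Qup} and H\"older's inequality.
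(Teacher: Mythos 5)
Your proof is correct and follows essentially the same route as the paper: both arguments insert the slow-decay lower bound $Q\gtrsim R^{s}$ to trade $u$ (or $u^p$) for $Qu^p$, apply H\"older with exponents $p$ and $p/(p-1)$, and invoke Lemma \ref{lem: B_R Qup}, arriving at the same exponent $n-\tfrac{2m+s}{p-1}<n$. The only difference is cosmetic (you bound $\int_{B_R}u^p$ first and then pass to $\int_{B_R}u$, whereas the paper inserts $Q^{1/p}$ directly into $\int_{B_R}u$), and your explicit polynomial bound $O(R^{\alpha})$ with $\alpha<n$ is, if anything, slightly sharper than the stated $o(R^n)$.
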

	\begin{proof}
		By slow decay barrier condition, there exist constants $R_2 > 0$  and  $C > 0$ such that,  for any  $|x|\geq R_2$, one has 
		\begin{equation}\label{Q lower bound}
			Q(x)\geq C |x|^{s}.
		\end{equation}
		Making use of Lemma \ref{lem: B_R Qup}, for $R\gg1$, together with this assumption \ref{Q lower bound}, one has
		\begin{align*}
			\int_{B_R}u\ud x=&\int_{B_{R_2}}u(x)\ud x+\int_{B_{R}\backslash B_{R_2}}u\ud x\\
			\leq &C+CR^{-\frac{s}{p}}\int_{B_{R}\backslash B_{R_2}}Q^{\frac{1}{p}}u\ud x\\
			\leq &C+CR^{-\frac{s}{p}}\int_{B_{R}}Q^{\frac{1}{p}}u\ud x\\
			\leq &C+CR^{-\frac{s}{p}}(\int_{B_{R}}Qu^p\ud x)^{\frac{1}{p}}R^{n(1-\frac{1}{p})}\\
			\leq &C+CR^{n-\frac{2m+s}{p-1}}\\
			=&o(R^n).
		\end{align*}
		This completes the proof. 
			\end{proof}

	It is well-known  that the fundamental solution for the polyharmonic operator $(-\Delta)^m$  on $\mr^n$ satisfies the following equation
	\begin{equation}\label{Green}
		(-\Delta)^{m}|x|^{2m-n}=C(n,m)\delta_0(x)
	\end{equation}
	where $\delta_0(x)$ is  the delta function and $C(n,m)$ is some positive constant depending on $m,n$. For more details, the interested readers are referred  to  \cite{ACL}. With help of Lemma \ref{lem:|y|^n-2m+kQu^p in L1}, in order to get integral representation for the solution $u$,  let us consider the following function $v$: $$v(x):=\frac{1}{C(n,m)}\int_{\mr^n}\frac{Q(y)u(y)^p}{|x-y|^{n-2m}}\ud y.$$
	
	The following lemma studies the some properties of the function $v$, which can be derived using Theorem 6.21 in \cite{LL}. For simplicity, the proof is omitted.
	
	\begin{lemma}\label{lem:potential}
		For $1\leq i\leq 2m-1$, there holds
		$$\nabla^iv(x)=\frac{1}{C(n,m)}\int_{\mr^n}\nabla_x^i|x-y|^{2m-n}Q(y)u(y)^p\ud y$$
		as well as 
		$$(-\Delta)^mv(x)=Q(x)u(x)^p.$$
	\end{lemma}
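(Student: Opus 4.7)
The plan is to treat $v$ as the convolution $v = \frac{1}{C(n,m)}\bigl(|\cdot|^{2m-n} \ast (Qu^p)\bigr)$ and to justify the interchange of differentiation and integration using the integrability estimates supplied by Lemma \ref{lem:|y|^n-2m+kQu^p in L1}. The first identity is essentially the statement that this convolution is classically differentiable up to order $2m-1$ with the expected termwise formula; the second follows from a distributional argument based on the Green's function relation \eqref{Green}.

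For the first identity, I would argue by induction on $i$. A direct computation on $\mr^n\setminus\{0\}$ using the homogeneity of $|x|^{2m-n}$ gives
\[
|\nabla_x^i |x-y|^{2m-n}| \leq C_{n,m,i}\,|x-y|^{2m-n-i}, \qquad y\neq x,
\]
so for $1\leq i\leq 2m-1$ the kernel is locally integrable near the diagonal $y=x$ and globally integrable against $Qu^p\,\ud y$ by Lemma \ref{lem:|y|^n-2m+kQu^p in L1}(1). To pass $\partial_{x_j}$ under the integral at a fixed $x_0\in\mr^n$, I would split the domain of integration into $B_{2r}(x_0)$ and its complement for some $r>0$. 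On the exterior piece, the integrand and its $x$-derivatives are smooth for $x\in B_r(x_0)$ and dominated by $C|y-x_0|^{2m-n-i}Q(y)u(y)^p$, an $L^1$ function of $y$, so the standard Leibniz rule applies directly. On the interior piece $B_{2r}(x_0)$, $Qu^p$ is bounded by the local $L^\infty$ hypothesis, but the moving singularity at $y=x$ prevents a fixed $L^1$ majorant in $y$ alone; I would resolve this by replacing $|x-y|^{2m-n}$ with the smooth regularisation $(|x-y|^2+\delta)^{(2m-n)/2}$, differentiating classically, and sending $\delta\to 0^+$ by dominated convergence, which is available because $\int_{B_{2r}(x_0)}|x-y|^{-(n-2m+i)}\,\ud y$ is finite and bounded uniformly in $x\in B_r(x_0)$ thanks to $n-2m+i<n$.

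For the second identity, the first part already gives $v\in C^{2m-1}(\mr^n)$. I would then establish $(-\Delta)^m v = Qu^p$ first in the distributional sense: for any $\varphi\in C_c^\infty(\mr^n)$, Fubini's theorem together with \eqref{Green} yields
\begin{align*}
\int_{\mr^n} v(x)(-\Delta)^m\varphi(x)\,\ud x
&= \frac{1}{C(n,m)}\int_{\mr^n} Q(y)u(y)^p\int_{\mr^n}|x-y|^{2m-n}(-\Delta_x)^m\varphi(x)\,\ud x\,\ud y \\
&= \int_{\mr^n} Q(y)u(y)^p\varphi(y)\,\ud y.
\end{align*}
Hence $(-\Delta)^m v=Qu^p$ as distributions; since $Qu^p\in L^\infty_{loc}$, standard interior elliptic regularity upgrades this to the pointwise identity claimed. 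The main technical obstacle is the treatment of the moving singularity $y=x$ when differentiating inside the integral in the first part; the rest is routine bookkeeping with the tail estimate of Lemma \ref{lem:|y|^n-2m+kQu^p in L1}.
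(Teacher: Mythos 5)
Your proposal is correct and is essentially the same approach as the paper: establish the kernel bound $|\nabla_x^i|x-y|^{2m-n}|\leq C|x-y|^{2m-n-i}$, invoke the integrability from Lemma \ref{lem:|y|^n-2m+kQu^p in L1} to justify differentiating under the integral sign, and then use the fundamental-solution relation \eqref{Green} to identify $(-\Delta)^m v$. The paper simply outsources both steps to Theorem 6.21 in \cite{LL}, whereas you fill in the details yourself (regularisation plus dominated convergence for the derivatives; Fubini, the distributional identity, and interior elliptic regularity for the equation), so the underlying argument is the same even though your write-up is more self-contained.
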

	The function $v$ and the solution $u$ have many common properties. 
In particular, the following lemma states one which will be used later.
	
	\begin{lemma}\label{lem: nabla v growth}
		For $R\gg1$ and $0\leq k\leq 2m-1$, there holds
		$$\int_{B_R}|\nabla^kv(x)|\ud x=O(R^{n-k-\frac{2m+s}{p-1}}).$$
	\end{lemma}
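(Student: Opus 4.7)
The plan is to estimate $|\nabla^k v(x)|$ pointwise from the derivative formula in Lemma \ref{lem:potential}, then integrate over $B_R(0)$ and swap the order of integration via Fubini. From the bound $|\nabla_x^k |x-y|^{2m-n}| \leq C|x-y|^{2m-n-k}$, I get
$$\int_{B_R(0)}|\nabla^k v(x)|\,\ud x \leq C\int_{\mr^n} Q(y) u(y)^p \left(\int_{B_R(0)} \frac{\ud x}{|x-y|^{n-2m+k}}\right) \ud y.$$
Since $0\leq k\leq 2m-1$ gives $n-2m+k < n$, the inner integral is finite for every $y$, and the whole argument reduces to estimating it uniformly and splitting the $y$-domain appropriately.

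Next I would split the outer integral at the scale $|y|\sim 2R$. For $y\in B_{2R}(0)$, the inclusion $B_R(0)\subset B_{3R}(y)$ combined with $n-2m+k<n$ yields the crude bound
$$\int_{B_R(0)}\frac{\ud x}{|x-y|^{n-2m+k}} \leq \int_{B_{3R}(y)}\frac{\ud x}{|x-y|^{n-2m+k}} \leq CR^{2m-k}.$$
For $y\in \mr^n\setminus B_{2R}(0)$ and $x\in B_R(0)$ we have $|x-y|\geq |y|/2$, so the inner integral is at most $CR^n |y|^{-(n-2m+k)}$. This gives the decomposition
$$\int_{B_R(0)}|\nabla^k v(x)|\,\ud x \leq CR^{2m-k}\int_{B_{2R}(0)} Qu^p\,\ud y \;+\; CR^n\int_{\mr^n\setminus B_{2R}(0)}\frac{Q(y)u(y)^p}{|y|^{n-2m+k}}\,\ud y.$$

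Now I would invoke the two previous estimates. The first term is controlled by Lemma \ref{lem: B_R Qup}, which gives $\int_{B_{2R}(0)} Qu^p\,\ud y \leq CR^{n-2m-\frac{2m+s}{p-1}}$, producing the bound $CR^{n-k-\frac{2m+s}{p-1}}$. The second term is controlled by Lemma \ref{lem:|y|^n-2m+kQu^p in L1}(2) applied at $x=0$, giving $\int_{\mr^n\setminus B_{2R}(0)} |y|^{-(n-2m+k)}Qu^p\,\ud y \leq CR^{-k-\frac{2m+s}{p-1}}$, hence the same bound $CR^{n-k-\frac{2m+s}{p-1}}$ after multiplication by $R^n$. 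Adding the two yields the claimed estimate $\int_{B_R(0)}|\nabla^k v|\,\ud x = O(R^{n-k-\frac{2m+s}{p-1}})$.

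There is no real obstacle here: the estimate is essentially a convolution bound with a mildly singular kernel, and the difficulty has already been absorbed into Lemma \ref{lem: B_R Qup} and Lemma \ref{lem:|y|^n-2m+kQu^p in L1}. The only small technical point is that one should verify Fubini's theorem applies, which is immediate since the double integrand is non-negative and Lemma \ref{lem:|y|^n-2m+kQu^p in L1}(1) guarantees that the resulting integrals are finite for every fixed $x$.
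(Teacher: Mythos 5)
Your proposal is correct and follows essentially the same route as the paper: the same pointwise bound from Lemma \ref{lem:potential}, the same splitting of the $y$-integration at scale $2R$, and the same invocation of Lemma \ref{lem: B_R Qup} for the near part and Lemma \ref{lem:|y|^n-2m+kQu^p in L1}(2) for the far part. No issues.
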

	
	\begin{proof}
	With help of Lemma \ref{lem:potential} and Lemma \ref{lem:|y|^n-2m+kQu^p in L1},  there holds 
		\begin{align*}
			&\int_{B_R}|\nabla^kv(x)|\ud x\\
			\leq &C\int_{B_R}\int_{\mr^n}\frac{Q(y)u(y)^p}{|x-y|^{n-2m+k}}\ud y\ud x\\
			\leq &C\int_{B_R}\int_{\mr^n\backslash B_{3R}}\frac{Q(y)u(y)^p}{|x-y|^{n-2m+k}}\ud y\ud x\\
			&+C\int_{B_R}\int_{ B_{3R}}\frac{Q(y)u(y)^p}{|x-y|^{n-2m+k}}\ud y\ud x\\
			\leq &CR^{n-k-\frac{2m+s}{p-1}}+C\int_{ B_{3R}}Q(y)u(y)^p\left(\int_{B_R}\frac{1}{|x-y|^{n-2m+k}}\ud x\right) \ud y\\
			\leq &CR^{n-k-\frac{2m+s}{p-1}}+C\int_{ B_{3R}}Q(y)u(y)^p\int_{B_{4R}}\frac{1}{|z|^{n-2m+k}}\ud z \ud y\\
			\leq &CR^{n-k-\frac{2m+s}{p-1}}.
		\end{align*}
		\end{proof}
	
	The following result has been established as Proposition 1.3 in \cite{NY}. Here, we give another proof based on the work of \cite{Mar}.
	
	\begin{theorem}\label{thm: u=v}
		Considering  the equation \eqref{equ:Lane-Emden equ} satisfying the condition stated as before, there holds
		$$u(x)=\frac{1}{C(n,m)}\int_{\mr^n}\frac{Q(y)u(y)^p}{|x-y|^{n-2m}}\ud y.$$
	\end{theorem}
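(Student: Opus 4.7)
The plan is to set $w := u - v$ and prove $w \equiv 0$ by combining an $m$-poly-harmonic Liouville principle with the integral growth estimates already established. First I would observe that $(-\Delta)^m w = 0$ on $\mr^n$: Lemma \ref{lem:potential} yields $(-\Delta)^m v = Q u^p$, while the equation \eqref{equ:Lane-Emden equ} gives the same equality for $u$. Elliptic regularity for the poly-harmonic operator then promotes $w$ to a smooth (indeed real analytic) solution of $(-\Delta)^m w = 0$ on all of $\mr^n$.

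Next I would control the $L^1$ growth of $w$ on large balls. Because $u > 0$ and the kernel defining $v$ is non-negative (with $Q > 0$ and $u > 0$), we have $|w| \leq u + v$ pointwise. Lemma \ref{lem: u L^1 growth} gives $\int_{B_R(0)} u\,\ud x = o(R^n)$, and Lemma \ref{lem: nabla v growth} with $k = 0$ gives $\int_{B_R(0)} v\,\ud x = O(R^{n - \frac{2m+s}{p-1}})$. Since $s > -2m$ and $p > 1$, the exponent $\frac{2m+s}{p-1}$ is strictly positive, so $\int_{B_R(0)} v \, \ud x = o(R^n)$ as well. For any fixed $x \in \mr^n$ and $R \gg |x|$ one has $B_R(x) \subset B_{2R}(0)$, hence
$\int_{B_R(x)} |w| \, \ud y = o(R^n)$ as $R \to \infty$.

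The conclusion then follows from Pizzetti's mean value formula: for a smooth $m$-poly-harmonic function $w$ and any fixed $x \in \mr^n$, the ball average $\frac{1}{|B_R|}\int_{B_R(x)} w \, \ud y$ is a polynomial in $R^2$ of degree at most $m - 1$, with coefficients that are constant multiples of $(\Delta^k w)(x)$ for $0 \leq k \leq m - 1$. The previous step forces this polynomial to satisfy $\frac{1}{|B_R|}\int_{B_R(x)} w \, \ud y = o(1)$ as $R \to \infty$, so every coefficient must vanish; in particular $w(x) = 0$. As $x \in \mr^n$ was arbitrary, $u \equiv v$, which is the desired representation.

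The delicate point I expect to be the $L^1$-growth comparison: one must use the full strength of the slow decay hypothesis $s > -2m$ (together with $p > 1$) to guarantee that $\frac{2m+s}{p-1} > 0$, so that the bound from Lemma \ref{lem: nabla v growth} is genuinely $o(R^n)$ rather than merely $O(R^n)$. Without this strict inequality, Pizzetti's formula would only rule out the $R^{2(m-1)}$ term and leave lower-order polynomial contributions to $w$, and the argument would collapse. The remaining ingredients --- smoothness of $w$, translation of the ball integrals, and Pizzetti's formula itself --- are classical once the potential-theoretic framework of Lemmas \ref{lem:|y|^n-2m+kQu^p in L1}--\ref{lem: nabla v growth} is in hand.
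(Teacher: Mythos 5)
Your proposal is correct and follows essentially the same route as the paper: set $P=u-v$, note $(-\Delta)^mP=0$ via Lemma \ref{lem:potential}, derive the $o(R^n)$ bound on $\int_{B_R}|P|$ from Lemmas \ref{lem: u L^1 growth} and \ref{lem: nabla v growth} (using $\tfrac{2m+s}{p-1}>0$), and conclude $P\equiv 0$ from Pizzetti's mean value formula. The only cosmetic difference is that you kill all coefficients of the Pizzetti polynomial at once, while the paper argues by contradiction on the largest non-vanishing $\Delta^kP$; the substance is identical.
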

	\begin{proof}
		Set $P:=u-v$. Lemma \ref{lem:potential} and the equation  \eqref{equ:Lane-Emden equ} imply $P$ is a polyharmonic function, that is,
		$$(-\Delta)^mP=0.$$
		With help of Lemma \ref{lem: u L^1 growth}, Lemma \ref{lem: nabla v growth} and the facts $u,v$ are positive, one has 
		\begin{equation}\label{P growth}
			\int_{B_R}|P|\ud x\leq \int_{B_{R}}u\ud x+\int_{B_R}v\ud x=o(R^n).
		\end{equation}
		
		We now claim that the estimate \eqref{P growth} guarantees that $P\equiv 0$. The argument follows a standard approach by applying Proposition 4 in [16] and proceeding similarly to the proof of Theorem 5 in [16]. For the readers' convenience, we provide a brief outline of the proof below.
		
		Let us recall a formula for a polyharmonic function (See \cite{Piz} or Lemma 3 in \cite{Mar}): for any $x\in \mr^n$ and $R>0$, one has
		\begin{equation}\label{PZ formular}
			\frac{1}{|B_R(x)|}\int_{B_R(x)} P(y)\ud y=\sum^{m-1}_{i=0}c_i R^{2i}\Delta^{i}P(x) 
		\end{equation}
		where $c_i$ are some positive dimensional constants.  Suppose there exists a $0 \leq k \leq m-1$ which is largest such that $\Delta^kP \not\equiv 0$. Since  $\Delta^kP \not\equiv 0$, there must be at least one point $x_0 \in \mr^n$ such that  $\Delta^kP(x_0) \not= 0$. Apply above formula with $x = x_0$ to get 
		\begin{equation}\label{PZ formular1}
			\frac{1}{|B_R(x_0)|}\int_{B_R(x_0)} P(y)\ud y=\sum^{k}_{i=0}c_i R^{2i}\Delta^{i}P(x_0). 
		\end{equation} 
		Now we divide the both sides by $R^{2k}$ and then take the limit as $R\to\infty$ in the formula \eqref{PZ formular1}, we see that the coefficient of the leading term must be zero i.e.
		$$\Delta^{k}P(x_0)=0.$$ 
			This will contradict with the assumption that $\Delta^k P (x_0) \not = 0$. Therefore our claim holds true.
	\end{proof}

	\section{Preparations}\label{section3}
	For simplicity,  we use the notation $\int_{\mr^{kn}}$ to denote the $k-$th multiple integral $\int\cdots\int.$  The various measures should mean 
	$$\ud \mu(y):=\frac{1}{C(n,m)}Q(y)u(y)^p\ud y \quad \mathrm{in}\;\;\mr^n$$
	and
	$$\ud\mu(y_1,y_2,\cdots,y_l ):=\ud \mu(y_1)\ud \mu(y_2)\cdots \ud \mu(y_l) \quad \mathrm{in}\;\;\mr^{ln}.$$

	Throughout this section, without special notification, we should consider the equation \eqref{equ:Lane-Emden equ} under slow decay barrier condition as mentioned before.
	For later references,  we define three non-negative functions as follows:
	\begin{equation}\label{A1}
		A_1(x) :=\int_{\mr^{2n}}\frac{|y-z|^2(|x-y|^2+|x-z|^2)}{|x-y|^{n-2m+4}|x-z|^{n-2m+4}}\ud\mu(y)\ud\mu(z).
	\end{equation}
	\begin{equation}\label{A_2}
		A_2(x) :=\int_{\mr^{3n}}\frac{|y-z|^2\left(|x-z|^2|y-s|^2+|x-y|^2|z-s|^2\right)}{|x-y|^{n-2m+4}|x-z|^{n-2m+4}|x-s|^{n-2m+2}}\ud\mu(y, z, s).
	\end{equation}
	as well as
	\begin{equation}\label{A_3}
		A_3(x):=\int_{\mr^{2n}}\frac{|y-z|^4}{|x-y|^{n-2m+4}|x-z|^{n-2m+4}}\ud\mu(y, z)
	\end{equation}
	
	In fact, Lemma \ref{lem:|y|^n-2m+kQu^p in L1} and Fubini's theorem ensure that $A_1, A_2, A_3$ are well-defined for each $x\in \mr^n.$
	
	Now, we set up a short notation $A(u)$ to indicate the right hand side in the following equation, that is,  
	\begin{equation}\label{A(u)}
		A(u):=\frac{1}{(m-1)(n-2m)}\left(-\frac{2m-2}{n-2m}|\nabla u|^2-u\Delta u\right).
	\end{equation}

In fact, $A(u)$ arises from the representation of the scalar curvature $R_g$ using \eqref{R_g=A(u)u^p}, which implies that the positivity of $R_g$ is equivalent to 
$A(u)>0$. Utilizing Fubini's theorem, 
$A(u)$  can be expressed through a positive  integral representation. Using a similar approach, we denote $B(u)$ for 
$Q^{(4)}_g$  multiplied by $u^{\frac{n+4}{n-2m}}$ up to a  positive constant.  Subsequently, 
$B(u)$ is represented in terms of 
$A(u)$ and $A_i$ , which were defined earlier. By leveraging the key observations 
$A(u)>0$ , $u^2A_3+A(u)^2-uA_2>0$ and $u^2A_3-A(u)^2>0$,  we establish that 
$Q^{(4)}_g$ is positive.

	Now we do some calculations in terms of those functions we just defined. Firstly,  we have a lemma.
	\begin{lemma}\label{lem:Delta u^t}
		For each real number $t\in \mr$, there holds
		$$\Delta u^t=t\left(1+\frac{(t-1)(2m-n)}{2m-2}\right)u^{t-1}\Delta u-\frac{t(t-1)}{2}(n-2m)^2u^{t-2}A(u).$$
	\end{lemma}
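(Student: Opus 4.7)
The proof is essentially a direct algebraic computation, so I would not expect any real obstacle. The plan is to use the chain rule to expand $\Delta u^t$ and then eliminate $|\nabla u|^2$ in favor of $A(u)$ and $u\Delta u$ via the defining identity \eqref{A(u)}.

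First I would compute, for a positive smooth $u$ and any real $t$,
\begin{equation*}
\Delta u^t = t\, u^{t-1}\Delta u + t(t-1)\, u^{t-2}|\nabla u|^2,
\end{equation*}
which is just the product rule applied twice (or equivalently $\mathrm{div}(t u^{t-1}\nabla u)$). Since the stated conclusion is polynomial in $\Delta u$, $|\nabla u|^2$ and $A(u)$, I just need to invert \eqref{A(u)} to write $|\nabla u|^2$ as a linear combination of $u\Delta u$ and $A(u)$.

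From the definition
\begin{equation*}
A(u)=\frac{1}{(m-1)(n-2m)}\left(-\frac{2m-2}{n-2m}|\nabla u|^2 - u\Delta u\right),
\end{equation*}
I would multiply through by $(m-1)(n-2m)$ and solve for $|\nabla u|^2$, obtaining
\begin{equation*}
|\nabla u|^2 = -\frac{(n-2m)^2}{2}\,A(u) \;-\; \frac{n-2m}{2(m-1)}\, u\Delta u,
\end{equation*}
where I use that $\dfrac{(m-1)(n-2m)^2}{2m-2} = \dfrac{(n-2m)^2}{2}$.

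Substituting this expression into the chain-rule identity gives
\begin{equation*}
\Delta u^t = t u^{t-1}\Delta u - \frac{t(t-1)(n-2m)}{2(m-1)}\, u^{t-1}\Delta u - \frac{t(t-1)}{2}(n-2m)^2 u^{t-2}A(u),
\end{equation*}
and factoring the first two terms produces the coefficient $t\bigl(1+\tfrac{(t-1)(2m-n)}{2m-2}\bigr)u^{t-1}\Delta u$, which matches the statement exactly (using $2(m-1)=2m-2$). The only mild subtlety is keeping the sign in $(2m-n)$ vs.\ $(n-2m)$ consistent when combining the $u^{t-1}\Delta u$ terms, but this is routine bookkeeping rather than a genuine difficulty.
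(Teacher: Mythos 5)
Your computation is correct and takes essentially the same route as the paper: expand $\Delta u^t$ by the product rule and then eliminate $|\nabla u|^2$ by inverting the defining identity \eqref{A(u)}. The only cosmetic difference is that you expand the solved-for $|\nabla u|^2$ into two separate terms before substituting, whereas the paper substitutes the factored form $\frac{2m-n}{2m-2}\bigl((m-1)(n-2m)A(u)+u\Delta u\bigr)$ directly; the algebra is identical.
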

	\begin{proof}
		This is a direct computation. In fact, it is not hard to check the following computation:
		\begin{align*}
			\Delta u^t=&tu^{t-1}\Delta u+t(t-1)u^{t-2}|\nabla u|^2\\
			=&tu^{t-1}\Delta u+t(t-1)u^{t-2}\frac{2m-n}{2m-2}\left((m-1)(n-2m)A(u)+u\Delta u\right)\\
			=&t\left(1+\frac{(t-1)(2m-n)}{2m-2}\right)u^{t-1}\Delta u-\frac{t(t-1)}{2}(n-2m)^2u^{t-2}A(u).
		\end{align*}
	\end{proof}
	
	A similar computation gives the following formula.
	
	\begin{lemma}\label{lem:nabla unabla Delta u}
		For $3\leq m<\frac{n}{2}$, 	there holds
		\begin{align*}
			\nabla u\cdot\nabla \Delta u = & \frac{2m-n}{4(m-2)}u\Delta^2u\\
			& +\frac{2m-n-2}{4(m-1)}(\Delta u)^2+(n-2m)^2(m-1)(n+2-2m)\frac{A_1}{2}.
		\end{align*}
	\end{lemma}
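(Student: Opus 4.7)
The plan is to exploit the integral representation $u(x) = \int |x-y|^{2m-n}\,d\mu(y)$ established in Theorem~\ref{thm: u=v} and differentiate under the integral sign (legitimate thanks to Lemma~\ref{lem:|y|^n-2m+kQu^p in L1}, which gives the required $L^1$ domination for derivatives of order up to $2m-1$, and Lemma~\ref{lem:potential} for the order-$2m$ Laplacian). Using $\Delta_x|x|^{\alpha}=\alpha(\alpha+n-2)|x|^{\alpha-2}$ with $\alpha=2m-n$ and $\alpha=2m-n-2$ one obtains the clean formulas
\begin{align*}
\nabla u(x) &= -(n-2m)\int\frac{x-y}{|x-y|^{n-2m+2}}\,d\mu(y),\\
\Delta u(x) &= -(n-2m)(2m-2)\int\frac{d\mu(y)}{|x-y|^{n-2m+2}},\\
\nabla\Delta u(x) &= (n-2m)(n-2m+2)(2m-2)\int\frac{x-y}{|x-y|^{n-2m+4}}\,d\mu(y),\\
\Delta^{2}u(x) &= (n-2m)(n-2m+2)(2m-2)(2m-4)\int\frac{d\mu(y)}{|x-y|^{n-2m+4}}.
\end{align*}
The restriction $m\geq 3$ is precisely what makes the $\Delta^2 u$ kernel nonvanishing (the factor $2m-4$).

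Taking the inner product of the expressions for $\nabla u$ and $\nabla\Delta u$ produces a double integral whose numerator is $(x-y)\cdot(x-z)$. The key algebraic step is the polarization identity
\[
(x-y)\cdot(x-z) \;=\; \tfrac{1}{2}\bigl(|x-y|^{2}+|x-z|^{2}-|y-z|^{2}\bigr),
\]
which splits the double integral into three pieces. In the first two pieces the factors $|x-y|^{2}$ and $|x-z|^{2}$ cancel powers of the denominators so that the integral factors as a product of two single integrals; these products match $u\,\Delta^{2}u$ and $(\Delta u)^{2}$ up to explicit constants read off from the boxed formulas above. The third piece involves $|y-z|^{2}$ but with the asymmetric denominator $|x-y|^{n-2m+2}|x-z|^{n-2m+4}$; symmetrizing in $y\leftrightarrow z$ (i.e. averaging with the integral obtained by swapping the two exponents) reproduces exactly half of the defining integral $A_{1}(x)$ in \eqref{A1}.

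Collecting the three contributions and multiplying by the prefactor $-(n-2m)^{2}(n-2m+2)(2m-2)$ that arose from combining $\nabla u$ and $\nabla\Delta u$ yields the three coefficients $\frac{2m-n}{4(m-2)}$, $\frac{2m-n-2}{4(m-1)}$ and $(n-2m)^{2}(m-1)(n+2-2m)/2$ in front of $u\Delta^{2}u$, $(\Delta u)^{2}$ and $A_{1}$, respectively. The only genuine difficulty is bookkeeping: one must track several negative signs (from $n>2m$) and rational factors simultaneously, and verify Fubini for the double integrals via Lemma~\ref{lem:|y|^n-2m+kQu^p in L1} applied at exponents $k=2$ and $k=4$; nothing more delicate than that is needed.
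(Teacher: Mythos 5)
Your proposal is correct and follows essentially the same route as the paper: differentiate the integral representation under the integral sign, apply the polarization identity $(x-y)\cdot(x-z)=\tfrac12(|x-y|^2+|x-z|^2-|y-z|^2)$ to split the double integral into three pieces, identify the first two with $u\Delta^2u$ and $(\Delta u)^2$ via factorization, and recognize the symmetrized third piece as $\tfrac12 A_1$. All constants and signs check out against the paper's computation.
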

	
	\begin{proof}
		Making use of Lemma \ref{lem:potential} and Theorem \ref{thm: u=v}, the following calculation is straightforward.  Since
		\begin{equation}\label{nabla u}
			\nabla u(x)=(2m-n)\int_{\mr^{n}}\frac{x-y}{|x-y|^{n-2m+2}}\ud\mu(y), 
		\end{equation}
		naturally one has:  
		\begin{align*}
			|\nabla u(x)|^2=&\sum_{i=1}^n\left((2m-n)\int_{\mr^n}\frac{x_i-y_i}{|x-y|^{n-2m+2}}\ud\mu(y)\right)^2\\
			=&(2m-n)^2\sum_{i=1}^n\int_{\mr^{n}}\frac{x_i-y_i}{|x-y|^{n-2m+2}}\ud\mu(y)\int_{\mr^{n}}\frac{x_i-z_i}{|x-z|^{n-2m+2}}\ud\mu(z)\\
			=&(2m-n)^2\int_{\mr^{2n}}\frac{(x-y)\cdot(x-z)}{|x-y|^{n-2m+2}|x-z|^{n-2m+2}}\ud\mu(y,z).
		\end{align*}
		Due to the fact that the order of differentiation and integration can be exchanged, one easily obtains  
		\begin{equation}\label{Delta u}
			\Delta u(x)=(2m-n)(2m-2)\int_{\mr^{n}}\frac{1}{|x-y|^{n-2m+2}}\ud \mu(y)
		\end{equation}
		as well as 
		$$\Delta^2u(x)=(2m-n)(2m-2)(2m-2-n)(2m-4)\int_{\mr^{n}}\frac{1}{|x-y|^{n-2m+4}}\ud \mu(y).$$
		By writing the iterated integration as the double integral,  $(\Delta u)^2$ can be rephrased as follows:
		$$(\Delta u(x))^2=(2m-n)^2(2m-2)^2\int_{\mr^{2n}}\frac{1}{|x-y|^{n-2m+2}|x-z|^{n-2m+2}}\ud\mu(y,z).$$
		
		Now first denote by $l$ the number $2m-n- 2$. Then combine Lemma \ref{lem:potential},  \eqref{nabla u} and \eqref{Delta u} together to see that $\nabla u\cdot\nabla \Delta u$ has the following integral representation:
		\begin{align*}
			&\nabla u\cdot\nabla \Delta u\\
			=&(2m-n)^2(2m-2)l\int_{\mr^{2n}}\frac{(x-y)\cdot(x-z)}{|x-y|^{n-2m+2}|x-z|^{n-2m+4}}\ud\mu(y,z)\\
			=&(2m-n)^2(m-1)l\int_{\mr^{2n}}\frac{|x-y|^2+|x-z|^2-|y-z|^2}{|x-y|^{n-2m+2}|x-z|^{n-2m+4}}\ud\mu(y,z)\\
			=&(2m-n)^2(m-1)l\int_{\mr^{2n}}\frac{1}{|x-y|^{n-2m}|x-z|^{n-2m+4}}\ud\mu(y,z)\\
			&+(2m-n)^2(m-1)l\int_{\mr^{2n}}\frac{1}{|x-y|^{n-2m+2}|x-z|^{n-2m+2}}\ud\mu(y,z)\\
			&-(2m-n)^2(m-1)l\int_{\mr^{2n}}\frac{|y-z|^2}{|x-y|^{n-2m+2}|x-z|^{n-2m+4}}\ud\mu(y,z)\\
			&= I + II + III.
		\end{align*}
		
		Now, we should deal with  terms on the right-hand side one by one. First term first, one has
		\begin{align*}
			 I	=&(2m-n)^2(m-1)l\int_{\mr^{n}}\frac{1}{|x-y|^{n-2m}}\ud\mu(y)\int_{\mr^{n}}\frac{1}{|x-z|^{n-2m+4}}\ud\mu(z)\\
			=&\frac{2m-n}{4(m-2)}u(x)\Delta^2u(x).
		\end{align*}
		Second term $II$, by using the same trick, can be rewritten as 
		\begin{align*}
			II =&(2m-n)^2(m-1)l\int_{\mr^{n}}\frac{1}{|x-y|^{n-2m+2}}\ud\mu(y)\int_{\mr^{n}}\frac{1}{|x-z|^{n-2m+2}}\ud\mu(z)\\
			=&\frac{2m-n-2}{4(m-1)}(\Delta u)^2.
		\end{align*}
		Finally, by doing variable changes for $y,z$, one has
		\begin{align*} 
			III &= -(2m-n)^2(m-1)l\int_{\mr^{2n}}\frac{|y-z|^2}{|x-z|^{n-2m+2}|x-y|^{n-2m+4}}\ud\mu(y,z).
		\end{align*}
	By symmetry of the integration, clearly there holds
		$$A_1(x) =2\int_{\mr^{2n}}\frac{|y-z|^2}{|x-y|^{n-2m+2}|x-z|^{n-2m+4}}\ud\mu(y,z).$$
			Combining these identities,  we obtain the desired identity:
		\begin{align*}
			\nabla u\cdot\nabla \Delta u = & \frac{2m-n}{4(m-2)}u\Delta^2u+\frac{2m-n-2}{4(m-1)}(\Delta u)^2\\
			& +(n-2m)^2(m-1)(n+2-2m)\frac{A_1}{2}.
		\end{align*}
	\end{proof}
	
	We also need the integral representation for function $A(u)$ which can be easily seen. For convenience, we record it as a lemma.
	
	\begin{lemma}\label{lem:Au)}
		There holds
		$$A(u)=\int_{\mr^{2n}}\frac{|y-z|^2}{|x-y|^{n-2m+2}|x-z|^{n-2m+2}}\ud\mu(y,z)>0.$$
	\end{lemma}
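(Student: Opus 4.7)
The plan is to expand $A(u)$ using the integral representations of $u$, $\nabla u$, and $\Delta u$ that were established in Lemma~\ref{lem:potential} and Theorem~\ref{thm: u=v} (and already displayed in \eqref{nabla u}--\eqref{Delta u} in the proof of Lemma \ref{lem:nabla unabla Delta u}), and then reduce the scalar and dot products by the elementary identity
\[
2(x-y)\cdot(x-z)=|x-y|^{2}+|x-z|^{2}-|y-z|^{2}.
\]
This is the same mechanism that powers the previous lemma, so I expect $A(u)$ to collapse to the advertised double integral after a short computation.

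First I would write
\[
|\nabla u(x)|^{2}=(2m-n)^{2}\int\!\!\int\frac{(x-y)\cdot(x-z)}{|x-y|^{n-2m+2}|x-z|^{n-2m+2}}\,\ud\mu(y,z)
\]
and
\[
u(x)\Delta u(x)=(2m-n)(2m-2)\int\!\!\int\frac{\ud\mu(y,z)}{|x-y|^{n-2m}|x-z|^{n-2m+2}},
\]
the latter being obtained by multiplying the representation $u=\int|x-y|^{2m-n}\ud\mu(y)$ with \eqref{Delta u}. Symmetrizing this expression over $y\leftrightarrow z$ gives
\[
u\Delta u=\tfrac12(2m-n)(2m-2)\int\!\!\int\frac{|x-y|^{2}+|x-z|^{2}}{|x-y|^{n-2m+2}|x-z|^{n-2m+2}}\,\ud\mu(y,z),
\]
which has the same denominator as the integral representation of $|\nabla u|^{2}$.

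Next I would substitute the polar identity into $|\nabla u|^{2}$, so that
\[
|\nabla u|^{2}=\tfrac12(2m-n)^{2}\!\int\!\!\int\!\frac{|x-y|^{2}+|x-z|^{2}-|y-z|^{2}}{|x-y|^{n-2m+2}|x-z|^{n-2m+2}}\ud\mu(y,z).
\]
Comparing with the symmetrized $u\Delta u$, the first two terms combine to $\frac{2m-n}{2m-2}u\Delta u$, yielding
\[
|\nabla u|^{2}=\frac{2m-n}{2m-2}u\Delta u-\frac{(2m-n)^{2}}{2}\,I,
\]
where $I$ is the integral appearing on the right-hand side of the lemma. Solving for $I$ and using $2m-n=-(n-2m)$ gives
\[
I=\frac{2}{(n-2m)(2m-2)}\Bigl(-u\Delta u-\frac{2m-2}{n-2m}|\nabla u|^{2}\Bigr)=A(u),
\]
which is precisely the claimed identity.

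Positivity is then immediate: the integrand $|y-z|^{2}|x-y|^{-(n-2m+2)}|x-z|^{-(n-2m+2)}$ is non-negative, and since $u>0$ and $Q>0$ the measure $\mu$ has positive mass on every ball, so $\mu\times\mu$ puts positive mass on the open set $\{(y,z):y\neq z\}$ where the integrand is strictly positive. The only ``obstacle'' is making sure all the exchanges of integrals are legal, but this is precisely what Lemma~\ref{lem:|y|^n-2m+kQu^p in L1} together with Fubini's theorem guarantees (as already used in the setup of the functions $A_{1},A_{2},A_{3}$).
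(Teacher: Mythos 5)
Your proposal is correct and follows essentially the same route as the paper: both expand $|\nabla u|^2$ and $u\Delta u$ as symmetrized double integrals against $\ud\mu(y,z)$ and apply the polar identity $2(x-y)\cdot(x-z)=|x-y|^2+|x-z|^2-|y-z|^2$ to collapse the combination defining $A(u)$ into the stated integral. The only cosmetic difference is that you solve for the integral $I$ from the expression for $|\nabla u|^2$ whereas the paper computes $(m-1)(n-2m)A(u)$ directly, and your explicit justification of strict positivity (positive mass of $\mu\times\mu$ off the diagonal) is a welcome detail the paper leaves implicit.
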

	\begin{proof}
		As we have shown in  \eqref{nabla u}, we have 
		$$|\nabla u|^2=(2m-n)^2\int_{\mr^{2n}}\frac{(x-y)\cdot(x-z)}{|x-y|^{n-2m+2}|x-z|^{n-2m+2}}\ud\mu(y)\ud\mu(z).$$
	By using Fubini's theorem and  the symmetry of $y$ and $z$, one has 
		\begin{align*}
			u\Delta u	=&(2m-n)(2m-2)\int_{\mr^{2n}}\frac{1}{|x-y|^{n-2m+2}|x-z|^{n-2m}}\ud \mu(y, z)\\
			=&(2m-n)(m-1)\int_{\mr^{2n}}\frac{|x-y|^2+|x-z|^2}{|x-y|^{n-2m+2}|x-z|^{n-2m+2}}\ud \mu(y, z).
		\end{align*}
		
		Since both terms $|\nabla u|^2$ and $u\Delta u$ have the integral representations, it is easy to see we have
		\begin{align*}
			&(m-1)(n-2m)A(u)\\
			=&-\frac{2m-2}{n-2m}|\nabla u|^2-u\Delta u\\
			=&(m-1)(n-2m)\int_{\mr^{2n}}\frac{|x-y|^2+|x-z|^2-2(x-y)\cdot(x-z)}{|x-y|^{n-2m+2}|x-z|^{n-2m+2}}\ud \mu(y, z)\\
			=&(m-1)(n-2m)\int_{\mr^{2n}}\frac{|y-z|^2}{|x-y|^{n-2m+2}|x-z|^{n-2m+2}}\ud \mu(y, z)
		\end{align*}
		which is the desired result.
	\end{proof}
	
	In the following calculation, we need the formulas for the Laplace of $A(u)$ as well as $\nabla u\cdot \nabla A(u)$ in terms of $A_i$ as well as $A(u)$. The following lemma serves this purpose.
	
	\begin{lemma}\label{lem:Delta A(u)}
		For $3\leq m<\frac{n}{2}$, there holds
		\begin{enumerate}
			\item {$\Delta A(u)=(2m-n-2)(4m-n-6)A_1-(2m-n-2)^2A_3,$}
			\item {$\nabla u\cdot\nabla A(u)=-\frac{n-2m+2}{2(m-1)}A(u)\Delta u
				+\frac{(n-2m)(n-2m+2)}{2}\left(uA_1
				-A_2\right).$}
		\end{enumerate}
	\end{lemma}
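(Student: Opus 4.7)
The plan is to prove both identities by differentiating under the integral sign in the representation of $A(u)$ from Lemma \ref{lem:Au)}, and then to simplify the resulting multiple integrals via the symmetry $y\leftrightarrow z$ together with the polarization identity $(x-a)\cdot(x-b)=\frac{1}{2}(|x-a|^2+|x-b|^2-|a-b|^2)$. The interchange of derivative and integral is justified by Lemma \ref{lem:|y|^n-2m+kQu^p in L1}, exactly as in the proof of Lemma \ref{lem:potential}.

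For part (1), set $\alpha=n-2m+2$, so the kernel of $A(u)$ is $|y-z|^2|x-y|^{-\alpha}|x-z|^{-\alpha}$. I would expand $\Delta_x(|x-y|^{-\alpha}|x-z|^{-\alpha})$ by the product rule, using the elementary identity $\Delta_x|x-y|^{-\alpha}=-\alpha(n-2-\alpha)|x-y|^{-\alpha-2}=(2m-n-2)(2m-4)|x-y|^{-(n-2m+4)}$ for the two pure Laplacian terms, and polarizing the mixed term $2\nabla_x|x-y|^{-\alpha}\cdot\nabla_x|x-z|^{-\alpha}$. The pure Laplacian terms combine to give a numerator of the form $|x-y|^2+|x-z|^2$, and polarization produces $|x-y|^2+|x-z|^2-|y-z|^2$; multiplying by $|y-z|^2$ and integrating recognizes the first kind of numerator as the definition of $A_1$ and the last as the definition of $A_3$. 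A direct check of the resulting coefficients yields $(2m-n-2)(2m-4)+(2m-n-2)^2=(2m-n-2)(4m-n-6)$ for the $A_1$ coefficient and $-(2m-n-2)^2$ for the $A_3$ coefficient, as claimed.

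For part (2), I would first compute $\nabla A(u)$ by differentiating under the integral, and observe that a $y\leftrightarrow z$ swap collapses the two resulting summands into
\[
\nabla A(u)=-2\alpha\int\int\frac{|y-z|^2(x-y)}{|x-y|^{n-2m+4}|x-z|^{n-2m+2}}\,d\mu(y,z).
\]
Dotting with $\nabla u$ as given in \eqref{nabla u} introduces a third variable $s$, and applying the polarization identity to $(x-s)\cdot(x-y)$ splits the triple integral into three pieces. In the first, the factor $|x-s|^2$ lowers one exponent so that the $s$-integral becomes $u(x)$ by Theorem \ref{thm: u=v}, and the remaining double integral is $A_1/2$ after another $y\leftrightarrow z$ symmetrization. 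In the second, the factor $|x-y|^2$ leaves the $s$-integral proportional to $\Delta u$ via \eqref{Delta u}, while the remaining double integral is exactly $A(u)$ by Lemma \ref{lem:Au)}. The third piece, carrying the sign from $-|y-s|^2$, is recognized as $A_2/2$ once the two summands defining $A_2$ are interchanged by $y\leftrightarrow z$. Assembling the coefficients gives the stated identity.

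The hard part is not any single step but the bookkeeping: tracking signs and the many factors of $(n-2m\pm 2)$, $(m-1)$, $(m-2)$, and, most crucially, recognizing the manifestly $(y,z)$-symmetric definitions of $A_1$ and $A_2$ inside integrals that are a priori unsymmetric. These symmetrization maneuvers are of exactly the same type already exploited in Lemma \ref{lem:nabla unabla Delta u}, so no new analytic ingredient is needed.
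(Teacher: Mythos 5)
Your proposal is correct and follows essentially the same route as the paper: differentiate under the integral sign (justified by Lemma \ref{lem:|y|^n-2m+kQu^p in L1}), apply the polarization identity $2(x-a)\cdot(x-b)=|x-a|^2+|x-b|^2-|a-b|^2$, and use $y\leftrightarrow z$ symmetrization to recognize $A_1$, $A_2$, $A_3$, $A(u)$, $u$, and $\Delta u$; all the coefficients you indicate check out. The only cosmetic difference is that you symmetrize $\nabla A(u)$ into a single term before dotting with $\nabla u$, whereas the paper keeps both summands and polarizes each separately — the two bookkeepings are equivalent.
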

	
	\begin{proof}
		By the same reason as in Lemma \ref{lem:potential}, we can freely exchange the Laplacian operator and integration. With the elementary identity $$2(x-z)\cdot(x-y)=|x-z|^2+|x-y|^2-|y-z|^2,$$  and a direct computation, we can arrive at:
		\begin{align*}
			&\Delta A(u)\\
			=&\int_{\mr^{2n}}\Delta_x\left(|x-y|^{2m-n-2}\cdot|x-z|^{2m-n-2}\right)|y-z|^2\ud\mu(y,z)\\
			=&\int_{\mr^{2n}}(2m-n-2)(2m-4)|x-y|^{2m-n-4}\cdot|x-z|^{2m-n-2}|y-z|^2\ud\mu(y,z)\\
			&+\int_{\mr^{2n}}(2m-n-2)(2m-4)|x-y|^{2m-n-2}\cdot|x-z|^{2m-n-4}|y-z|^2\ud\mu(y,z)\\
			&+2(2m-n-2)^2\int_{\mr^{2n}}\frac{(x-y)\cdot(x-z)}{|x-y|^{n-2m+4}|x-z|^{n-2m+4}}|y-z|^2\ud\mu(y,z)\\
			=&(2m-n-2)(2m-4)A_1\\
			&+(2m-n-2)^2\int_{\mr^{2n}}\frac{|x-y|^2+|x-z|^2-|y-z|^2}{|x-y|^{n-2m+4}|x-z|^{n-2m+4}}|y-z|^2\ud\mu(y,z)\\
			=&(2m-n-2)(2m-4)A_1+(2m-n-2)^2A_1-(2m-n-2)^2A_3\\
			=&(2m-n-2)(4m-n-6)A_1-(2m-n-2)^2A_3.
		\end{align*}
		
		To continue, we denote the number $(n-2m)(n+2-2m)$ by $\alpha(n, m)$. Another straightforward calculation provides the information we need:
		\begin{align*}
			&\nabla u\cdot\nabla A(u)\\
			=&\int_{\mr^{3n}}\nabla_x|x-s|^{2m-n}\cdot\nabla_x(|x-y|^{2m-n-2}\cdot|x-z|^{2m-n-2})|y-z|^2\ud\mu(y,z,s)\\
			=&\alpha(n, m)\int_{\mr^{3n}}\frac{(x-y)\cdot(x-s)|x-z|^2|y-z|^2}{|x-y|^{n-2m+4}|x-z|^{n-2m+4}|x-s|^{n-2m+2}}\ud\mu(y,z,s)\\
			&+ \alpha(n, m)\int_{\mr^{3n}}\frac{(x-z)\cdot(x-s)|x-y|^2|y-z|^2}{|x-y|^{n-2m+4}|x-z|^{n-2m+4}|x-s|^{n-2m+2}}\ud\mu(y,z,s)\\
			=&\frac{\alpha(n, m)}{2}\int_{\mr^{3n}}\frac{(|x-y|^2+|x-s|^2-|y-s|^2)|x-z|^2|y-z|^2}{|x-y|^{n-2m+4}|x-z|^{n-2m+4}|x-s|^{n-2m+2}}\ud\mu(y,z,s)\\
			&+\frac{\alpha(n, m)}{2}\int_{\mr^{3n}}\frac{(|x-z|^2+|x-s|^2-|z-s|^2)|x-y|^2|y-z|^2}{|x-y|^{n-2m+4}|x-z|^{n-2m+4}|x-s|^{n-2m+2}}\ud\mu(y,z,s)\\
			=& \alpha(n, m)\int_{\mr^{3n}}\frac{|x-z|^2|x-y|^2|y-z|^2}{|x-y|^{n-2m+4}|x-z|^{n-2m+4}|x-s|^{n-2m+2}}\ud\mu(y,z,s)\\
			&+\frac{\alpha(n, m)}{2}\int_{\mr^{3n}}\frac{|y-z|^2\left(|x-z|^2|x-s|^2+|x-y|^2|x-s|^2\right)}{|x-y|^{n-2m+4}|x-z|^{n-2m+4}|x-s|^{n-2m+2}}\ud\mu(y,z,s)\\
			&-\frac{\alpha(n, m)}{2}\int_{\mr^{3n}}\frac{|y-z|^2\left(|x-z|^2|y-s|^2+|x-y|^2|z-s|^2\right)}{|x-y|^{n-2m+4}|x-z|^{n-2m+4}|x-s|^{n-2m+2}}\ud\mu(y,z,s).
		\end{align*}
		
		The first term of the right side can be simplified to obtain:
		\begin{align*}
			& \int_{\mr^{3n}}\frac{|x-z|^2|x-y|^2|y-z|^2}{|x-y|^{n-2m+4}|x-z|^{n-2m+4}|x-s|^{n-2m+2}}\ud\mu(y,z,s)\\
			=&\int_{\mr^{3n}}\frac{|y-z|^2}{|x-y|^{n-2m+2}|x-z|^{n-2m+2}|x-s|^{n-2m+2}}\ud\mu(y,z,s)\\
			=&\int_{\mr^{2n}}\frac{|y-z|^2}{|x-y|^{n-2m+2}|x-z|^{n-2m+2}}\ud\mu(y,z)\int_{\mr^{n}}\frac{1}{|x-s|^{n-2m+2}}\ud\mu(s)\\
			=&-\frac{1}{2(m-1)(n-2m)}A(u)\Delta u.
		\end{align*}
		Similarly, the second term will take the short form: 
		\begin{align*}
			&\int_{\mr^{3n}}\frac{|y-z|^2\left(|x-z|^2|x-s|^2+|x-y|^2|x-s|^2\right)}{|x-y|^{n-2m+4}|x-z|^{n-2m+4}|x-s|^{n-2m+2}}\ud\mu(y,z,s)\\
			=&\int_{\mr^{3n}}\frac{|y-z|^2(|x-z|^2+|x-y|^2)}{|x-y|^{n-2m+4}|x-z|^{n-2m+4}|x-s|^{n-2m}}\ud\mu(y,z,s)\\
			=&\int_{\mr^{2n}}\frac{|y-z|^2(|x-z|^2+|x-y|^2)}{|x-y|^{n-2m+4}|x-z|^{n-2m+4}}\ud\mu(y,z)\int_{\mr^{n}}\frac{1}{|x-s|^{n-2m}}\ud\mu(s)\\
			=&uA_1.
		\end{align*}
		
		Based on the definition of the function $A_2(x)$ (see the formula \eqref{A_2}),  the third term is exactly $-\frac{\alpha(n, m)}{2}A_2$.
		
		Combining these identities, one finally obtains:
		$$\nabla u\cdot\nabla A(u)=-\frac{n-2m+2}{2(m-1)}A(u)\Delta u
		+\frac{(n-2m)(n-2m+2)}{2}\left(uA_1
		-A_2\right).$$
		\end{proof}
	
	Before we end this section, we do two technical integrations which are needed in the proof of our main result. The next lemma is to deal with the first integral.
	
	\begin{lemma}\label{lem:P_1/P_2}
		For $3\leq m<\frac{n}{2}$, there holds
		$$\int_{\mr^{4n}}\frac{P_1(x,y,z,s,w)}{P_2(x,y,z,s,w)}\ud\mu(y,z,s,w)=4u^2A_3+4A(u)^2-4uA_2$$
		where
		\begin{align*}
			P_1(x,y,z,s,w)=&(|x-y|^2|x-z|^2|s-w|^2-|x-y|^2|x-w|^2|z-s|^2\\
			&+|x-s|^2|x-w|^2|y-z|^2-|x-z|^2|x-s|^2|y-w|^2)^2
		\end{align*}
		and 
		$$P_2(x,y,z,s,w)=|x-y|^{n-2m+4}|x-z|^{n-2m+4}|x-s|^{n-2m+4}|x-w|^{n-2m+4}.$$
	\end{lemma}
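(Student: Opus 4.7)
The plan is to expand the square $P_1=(a-b+c-d)^2$, where
$a=|x-y|^2|x-z|^2|s-w|^2$, $b=|x-y|^2|x-w|^2|z-s|^2$, $c=|x-s|^2|x-w|^2|y-z|^2$, $d=|x-z|^2|x-s|^2|y-w|^2$,
and to integrate each of the ten resulting terms of $P_1/P_2$ against $\ud\mu(y,z,s,w)$ separately. In each term the $|x-\cdot|^2$ factors in the numerator cancel some of the $|x-\cdot|^{n-2m+4}$ factors of $P_2$, and after relabeling the dummy variables the resulting integral collapses to a product of the quantities $u$, $A(u)$, $A_2$, $A_3$ already introduced. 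All the interchanges of integrations throughout are legal by Lemma~\ref{lem:|y|^n-2m+kQu^p in L1} together with Fubini's theorem.

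For the four diagonal terms $a^2/P_2,\ b^2/P_2,\ c^2/P_2,\ d^2/P_2$ the numerator has the shape $|x-\alpha|^4|x-\beta|^4|\gamma-\delta|^4$ for a suitable partition $\{\alpha,\beta,\gamma,\delta\}=\{y,z,s,w\}$. Dividing by $P_2$ leaves the factors $|x-\alpha|^{n-2m}|x-\beta|^{n-2m}$, whose integrations yield $u(x)^2$, together with a remaining double integral that is exactly $A_3(x)$. The four diagonal terms therefore contribute $4u^2A_3$.

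The two cross terms with coefficient $+2$, namely $ac/P_2$ and $bd/P_2$, reduce every denominator factor to $|x-\cdot|^{n-2m+2}$, and the numerator splits across disjoint pairs of variables: $(y,z)$ and $(s,w)$ in the first case, $(z,s)$ and $(y,w)$ in the second. The quadruple integral then factors as $A(u)^2$ in each case, contributing $4A(u)^2$ in total.

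For the four cross terms with coefficient $-2$, namely $ab/P_2$, $ad/P_2$, $bc/P_2$, $cd/P_2$, exactly one denominator factor reduces to $|x-\cdot|^{n-2m}$, whose integral produces a factor of $u(x)$. A careful relabeling of the remaining three variables shows that the remaining triple integral equals
\[ I_1 := \int\int\int \frac{|y-z|^2|y-s|^2}{|x-y|^{n-2m+4}|x-z|^{n-2m+2}|x-s|^{n-2m+2}}\,\ud\mu(y,z,s). \]
Applying the substitution $y\leftrightarrow z$ to the second summand in the definition \eqref{A_2} of $A_2$ shows that its two summands coincide, so $I_1=A_2/2$. Hence each of these four terms contributes $-2\cdot u\cdot (A_2/2)=-uA_2$, for a total of $-4uA_2$. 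Summing all three contributions gives the claimed identity. The principal obstacle is simply the bookkeeping: for each of the ten expanded terms one must pick the right variable to integrate first and the right permutation of the remaining three variables to bring the integrand into one of the canonical forms $A_3$, $A(u)^2$, or $I_1=A_2/2$.
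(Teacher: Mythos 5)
Your proposal is correct and mirrors the paper's proof: it expands $P_1=(a-b+c-d)^2$ into the ten terms, integrates each via Fubini and cancellation of $|x-\cdot|^2$ factors, and uses the $y\leftrightarrow z$ symmetry of the two summands in the definition of $A_2$ to identify the $-2$ cross terms as $-uA_2$. No substantive difference from the paper's argument.
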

	\begin{proof}
		By elementary multiplication, we have the following identity:
		\begin{align*}
			P_1(x,y,z,s,w)=&(|x-y|^2|x-z|^2|s-w|^2-|x-y|^2|x-w|^2|z-s|^2\\
			&+|x-s|^2|x-w|^2|y-z|^2-|x-z|^2|x-s|^2|y-w|^2)^2\\
			=&|x-y|^4|x-z|^4|s-w|^4\\
			&+|x-y|^4|x-w|^4|z-s|^4\\
			&+|x-s|^4|x-w|^4|y-z|^4\\
			&+|x-z|^4|x-s|^4|y-w|^4\\
			&+2|x-y|^2|x-z|^2|x-s|^2|x-w|^2|s-w|^2|y-z|^2\\
			&+2|x-y|^2|x-z|^2|x-s|^2|x-w|^2|z-s|^2|y-w|^2\\
			&-2|x-y|^4|x-z|^2|x-w|^2|s-w|^2|z-s|^2\\
			&-2|x-z|^4|x-y|^2|x-s|^2|s-w|^2|y-w|^2\\
			&-2|x-w|^4|x-y|^2|x-s|^2|z-s|^2|y-z|^2\\
			&-2|x-s|^4|x-z|^2|x-w|^2|y-z|^2|y-w|^2\\
			& = \sum_{k = 1}^{10} D_k.
		\end{align*}
		With help of this identity, the integral will decompose into the ten terms, namely,
		\begin{align*}
			& \int_{\mr^{4n}}\frac{P_1(x,y,z,s,w)}{P_2(x,y,z,s,w)}\ud\mu(y,z,s,w)\\
			= &\sum_{k=1}^{10} \int_{\mr^{4n}}\frac{D_k}{P_2(x,y,z,s,w)}\ud\mu(y,z,s,w)\\
			:= & \sum_{k=1}^{10} I_k.
		\end{align*}

		Now we treat term by term on the right. By definition of $P_2$, the first integral can be evaluated into:
		\begin{align*}
			I_1 = & \int_{\mr^{4n}}\frac{|x-y|^4|x-z|^4|s-w|^4}{P_2(x,y,z,s,w)}\ud\mu(y,z,s,w)\\
			=&\int_{\mr^{4n}}\frac{|s-w|^4|x-y|^{2m-n}|x-z|^{2m-n}}{|x-s|^{n-2m+4}|x-w|^{n-2m+4}}\ud\mu(y,z,s,w)\\
			=&\int_{\mr^{n}}\frac{1}{|x-y|^{n-2m}}\ud\mu(y)\int_{\mr^{n}}\frac{1}{|x-z|^{n-2m}}\ud\mu(z)\\
			\cdot & \int_{\mr^{2n}}\frac{|s-w|^{4}}{|x-s|^{n-2m+4}|x-w|^{n-2m+4}}\ud\mu(s,w)\\
			=&u^2(x)A_3(x).
		\end{align*}
		
		By the symmetry of variables $y, z, s, w$, similar to $I_1$,  it is easy to see that $I_2 = I_3 = I_4 = u^2A_3$.

		The fifth integral $I_5$ can be treated as follows: 
		\begin{align*}
			&2\int_{\mr^{4n}}\frac{|x-y|^2|x-z|^2|x-s|^2|x-w|^2|s-w|^2|y-z|^2}{P_2(x,y,z,s,w)}\ud\mu(y,z,s,w)\\
			=&2\int_{\mr^{4n}}\frac{|s-w|^2|y-z|^2|x-y|^{2m-n-2}}{|x-z|^{n-2m+2}|x-s|^{n-2m+2}|x-w|^{n-2m+2}}\ud\mu(y,z,s,w)\\
			=&2\left(\int_{\mr^{2n}}\frac{|s-w|^2}{|x-s|^{n-2m+2}|x-w|^{n-2m+2}}\ud\mu(s,w)\right)\\
			\cdot & \left(\int_{\mr^{2n}}\frac{|y-z|^2}{|x-y|^{n-2m+2}|x-z|^{n-2m+2}}\ud\mu(y,z)\right)\\
			=&2A(u)^2(x) = I_5.
		\end{align*}
		
		Again by the symmetry of variables $y, z, s, w$, the sixth integral $I_6$ also equals to $2A(u)^2$.

		Before dealing with the last four integrals, we first recall the definition $A_2(x)$ as given in the formula \eqref{A_2} and rewrite it as 
	$$
			A_2(x)=\int_{\mr^{3n}}\frac{|y-z|^2\left(|x-z|^2|y-s|^2+|x-y|^2|z-s|^2\right)}{|x-y|^{n-2m+4}|x-z|^{n-2m+4}|x-s|^{n-2m+2}}\ud\mu(y, z, s).
	$$
	Using the symmetry of variables, $A_2(x)$ can be expressed as  
		\begin{equation}\label{A_2=integral}
			A_2(x)=2\int_{\mr^{3n}}\frac{|y-z|^2|y-s|^2|x-y|^{2m-n-4}}{|x-z|^{n-2m+2}|x-s|^{n-2m+2}}\ud\mu(y, z, s).
		\end{equation}
		
		Finally,  with help of Fubini's theorem and \eqref{A_2=integral}, the seventh integral $I_7$ can be reduced to:
		\begin{align*}
			&-2\int_{\mr^{4n}}\frac{|x-y|^4|x-z|^2|x-w|^2|s-w|^2|z-s|^2}{P_2(x,y,z,s,w)}\ud\mu(y,z,s,w)\\
			=&-2\int_{\mr^{4n}}\frac{|s-w|^2|z-s|^2|x-y|^{2m - n}}{|x-z|^{n-2m+2}|x-s|^{n-2m+4}|x-w|^{n-2m+2}}\ud\mu(y,z,s,w)\\
			=&-2 u(x) \int_{\mr^{3n}}\frac{|s-w|^2|z-s|^2}{|x-z|^{n-2m+2}|x-s|^{n-2m+4}|x-w|^{n-2m+2}}\ud\mu(z,s,w)\\
			=&-u(x)A_2(x) = I_7.
		\end{align*}
		By similar argument, we can get$I_8 = I_9 = I_{10} = -u(x)A_2(x).$
		
		Combining these identities, we finally conclude that
		$$\int_{\mr^{4n}}\frac{P_1(x,y,z,s,w)}{P_2(x,y,z,s,w)}\ud\mu(y,z,s,w) = [4u^2A_3+4A(u)^2-4uA_2](x).$$
	\end{proof}
	
	The second integral we need to handle is the following one.  The proof is straightforward  and we omit it.
	
	\begin{lemma}\label{lem: A(u)^2-uA_3}
		For $3\leq m<\frac{n}{2}$,
		there holds
		$$\frac{1}{2}\int_{\mr^{4n}}\frac{P_3(x,y,z,s,w)}{P_2(x,y,z,s,w)}\ud\mu(y,z,s,w) = [u^2A_3-A(u)^2](x),$$
		where
		$$P_3(x,y,z,s,w)=\left(|x-s|^2|x-w|^2|y-z|^2-|x-y|^2|x-w|^2|s-w|^2\right)^2.$$
	\end{lemma}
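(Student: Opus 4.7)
The plan is to mimic the proof of Lemma \ref{lem:P_1/P_2}: expand the square as $P_3 = T_1^2 - 2T_1 T_5 + T_5^2$, where $T_1$ and $T_5$ denote the two monomials appearing in the difference, divide each of the three resulting terms by $P_2$, and evaluate the fourfold integrals via Fubini's theorem. Global integrability, hence Fubini, is supplied by Lemma \ref{lem:|y|^n-2m+kQu^p in L1}.

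First I would treat the two diagonal pieces. For $T_1^2/P_2$, the factors $|x-s|^4$ and $|x-w|^4$ cancel against the corresponding factors of $P_2$, leaving the integrand
$$\frac{|y-z|^4}{|x-y|^{n-2m+4}|x-z|^{n-2m+4}|x-s|^{n-2m}|x-w|^{n-2m}}.$$
The $s$- and $w$-integrations separate and each produce a factor $u(x)=\int |x-\cdot|^{2m-n}\ud\mu$ via Theorem \ref{thm: u=v}, while the remaining double integral in $(y,z)$ is exactly $A_3(x)$ by definition \eqref{A_3}. Hence $\int T_1^2/P_2\,\ud\mu = u(x)^2 A_3(x)$. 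By the same bookkeeping, after using the permutation symmetry of the product measure $\ud\mu(y,z,s,w)$ to relabel dummy variables so that the surviving exponents match the $A_3$-template, the term $T_5^2/P_2$ contributes an additional $u(x)^2 A_3(x)$.

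For the cross term, the product $T_1 T_5$ cancels with $P_2$ to leave
$$\frac{|y-z|^2\,|s-w|^2}{|x-y|^{n-2m+2}|x-z|^{n-2m+2}|x-s|^{n-2m+2}|x-w|^{n-2m+2}},$$
and Fubini factors the fourfold integral as the product of the $(y,z)$- and $(s,w)$-double integrals, each equal to $A(u)(x)$ by Lemma \ref{lem:Au)}. This cross term contributes $-2A(u)^2(x)$. Summing the three pieces and dividing by $2$ yields $u^2A_3 - A(u)^2$, as claimed.

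The main obstacle is purely combinatorial: one must keep careful track of how the exponents $n-2m$, $n-2m+2$, $n-2m+4$ redistribute among the four factors after cancellation, so that each residual term is recognised as either an integrand for $u$, for $A(u)$, or for $A_3$. Beyond this symbolic bookkeeping, no new analytic ingredient is required, since the integrability needed to legitimise every use of Fubini has already been established in Section \ref{section2}.
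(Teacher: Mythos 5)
Your proposal is correct and is essentially the paper's own argument run in the opposite direction: the paper starts from $2u^2A_3-2A(u)^2$, writes it as a single fourfold integral via Fubini and the symmetry of $\ud\mu(y,z,s,w)$, and then recognises the resulting numerator as the perfect square $P_3$, whereas you expand $P_3$ first and evaluate the three resulting integrals; the bookkeeping is identical. One remark worth recording: your cross term and your $T_5^2$ term come out as stated only if the second monomial in $P_3$ is $|x-y|^2|x-z|^2|s-w|^2$ rather than the $|x-y|^2|x-w|^2|s-w|^2$ printed in the lemma --- this is evidently a typo in the statement (the paper's own three-term expansion of the square is consistent only with the former), and your proof tacitly, and correctly, uses the corrected version.
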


	\section{Proof of main result}\label{section4}
	
	We will divide the proof into two steps. 
	\vskip .2in
	
	{\bf Step 1.} In this place, our aim is to show that the scalar curvature of the metric $g$ is strictly positive.
	
	In fact,  this is rather easy. Notice that for a given conformal metric $g=u^{\frac{4}{n-2m}}|dx|^2$, 
	the scalar curvature can be calculated as
	\begin{align}\label{scalar curvature}
		R_g  &=-\frac{4(n-1)}{n-2}u^{-\frac{n+2}{n-2m}}\Delta u^{\frac{n-2}{n-2m}}\\
		 &=-\frac{4(n-1)}{n-2m}u^{\frac{4m-4-2n}{n-2m}}\left(\frac{2m-2}{n-2m}|\nabla u|^2+u\Delta u\right).\nonumber
	\end{align}
	With help of Lemma \ref{lem:Au)},	it is not hard to see that the formula \eqref{scalar curvature} yields that 
	\begin{equation}\label{R_g=A(u)u^p}
		R_g = 4(n-1)(m-1)u^{\frac{4m-4-2n}{n-2m}}A(u) > 0.
	\end{equation}

	{\bf Step 2.} We are now in the position to show that $Q_g^{(4)}$ is also strictly positive. Notice that when $m = 2$, we have nothing to do since the conclusion is just the assumption. In the next, we will always assume $m \geq 3$.
	
	Since we try to deal with $Q_g^{(4)}$, we write $t=\frac{n-4}{n-2m}$ throughout this step.
	
	By Lemma \ref{lem:Delta u^t}, we have
	\begin{equation}\label{Delat u^n-4/n-2m}
		\Delta u^t=\frac{t}{m-1}\left(u^{t-1}\Delta u-(m-2)(n-2m)(m-1)u^{t-2}A(u)\right).
	\end{equation}
	With help of the equations \eqref{Q^2k} and \eqref{Delat u^n-4/n-2m}, the fourth Q-curvature $Q_g^{(4)}$ can be calculated as:
	\begin{equation}\label{Q_4 }
		Q^{(4)}_g=u^{-\frac{n+4}{n-2m}}\Delta\left(\Delta u^t\right)=(m-2)(n-4)u^{-\frac{n+4}{n-2m}}B(u)
	\end{equation}
	where
	$$B(u):=\frac{\Delta(u^{t-1}\Delta u)}{(m-2)(n-2m)(m-1)}-\Delta(u^{t-2}A(u))$$
	To show that  $Q^{(4)}_g$ is positive, we just need to show that $B(u)$ is strictly positive.

	First of all, Lemma \ref{lem:Delta u^t} implies that, recall our $t$ is equal to $\frac{n-4}{n-2m}$,
	\begin{equation}\label{Delta u^t-1}
		\Delta u^{t-1}=-(m-2)(4m-4-n)u^{t-3}A(u)+\frac{(m-2)(n-2m+2)}{(n-2m)(m-1)}u^{t-2}\Delta u.
	\end{equation}
	
	Now this identity, together with Lemma \ref{lem:nabla unabla Delta u}, provides the formula:
	\begin{align*}
		\Delta(u^{t-1}\Delta u)=&(\Delta u^{t-1})\Delta u+u^{t-1}\Delta^2u+2(t-1)u^{t-2}\nabla u\cdot\nabla\Delta u\\
		=&-(m-2)(4m-4-n)u^{t-3}A(u)\Delta u \\
		& +\frac{(m-2)(n-2m+2)}{(n-2m)(m-1)}u^{t-2}(\Delta u)^2\\
		&+u^{t-1}\Delta^2u-u^{t-1}\Delta^2u+\frac{(m-2)(2m-n-2)}{(m-1)(n-2m)}u^{t-2}(\Delta u)^2\\
		&+2(m-2)(m-1)(n-2m)(n+2-2m)u^{t-2}A_1\\
		=&-(m-2)(4m-4-n)u^{t-3}A(u)\Delta u\\
		&+2(m-2)(m-1)(n-2m)(n+2-2m)u^{t-2}A_1.
	\end{align*}
	
	Once again Lemma \ref{lem:Delta u^t} provides the formula:
	\begin{equation}\label{Delta u^t-2}
		\Delta u^{t-2}=(4m-4-n)[-(3m-2-n)u^{t-4}A(u)+\frac{(n-2m+1)}{(n-2m)(m-1)}u^{t-3}\Delta u].
	\end{equation}
	Therefore Lemma \ref{lem:Delta A(u)} can be used to do the following calculation: 
	\begin{align*}
		& \Delta(u^{t-2}A(u))\\
		=&(\Delta u^{t-2})A(u)+u^{t-2}\Delta A(u)+2(t-2)u^{t-3}\nabla u\cdot\nabla A(u)\\
		=&\frac{(4m-4-n)(n-2m+1)}{(n-2m)(m-1)}u^{t-3}A(u)\Delta u \\
		&-(4m-4-n)(3m-2-n)u^{t-4}A(u)^2\\
		&+u^{t-2}\left((2m-n-2)(4m-n-6)A_1-(2m-n-2)^2A_3\right)\\
		&+2(t-2)u^{t-3}\frac{n-2m +2}{2(m-1)}\left(-A(u)\Delta u
		+(m-1)(n-2m)\left(uA_1
		-A_2\right)\right)\\
		=&\frac{n-4m+4}{(n-2m)(m-1)}u^{t-3}A(u)\Delta u-(4m-4-n)(3m-2-n)u^{t-4}A(u)^2\\
		&+2(n+2-2m)u^{t-2}A_1\\
		&-(4m-4-n)(n+2-2m)u^{t-3}A_2\\
		&-(2m-n-2)^2u^{t-2}A_3.
	\end{align*}
	Finally, put them together to conclude that
	\begin{align*}
		B(u)=&\frac{\Delta(u^{t-1}\Delta u)}{(m-2)(n-2m)(m-1)}-\Delta(u^{t-2}A(u))\\
		=&\frac{n+4-4m}{(n-2m)(m-1)}u^{t-3}A(u)\Delta u +2(n+2-2m)u^{t-2}A_1\\
		&-\frac{n-4m+4}{(n-2m)(m-1)}u^{t-3}A(u)\Delta u -2(n+2-2m)u^{t-2}A_1\\
		&+(4m-4-n)(3m-2-n)u^{t-4}A(u)^2\\
		&+(4m-4-n)(n+2-2m)u^{t-3}A_2\\
		&+(2m-n-2)^2u^{t-2}A_3\\
		=&(4m-4-n)(3m-2-n)u^{t-4}A(u)^2\\
		&+(4m-4-n)(n+2-2m)u^{t-3}A_2\\
		&+(n+2-2m)^2u^{t-2}A_3
	\end{align*}
	
	Now denote $n-2m+2$ by $l$  and  rewrite $B(u)$ as:
	\begin{equation}\label{B(u)}
		B(u) =(2m-2-l)(m-l)u^{t-4}A(u)^2+l(2m-2-l)u^{t-3}A_2+l^2u^{t-2}A_3.
	\end{equation}
	
	Based on the assumptions $n>2m$ and $m\geq 3$, it is rather easy to see that $l>2$ as well as  $m<2m-2$. 
	
	The positivity of $B(u)$  can be seen according to the range of $l$ in three different cases:
	
	{\bf Case 1:}  $2 < l \leq m$. 
	
	In this case, it is easy to check that the first two terms of \eqref{B(u)} are non-negative and the last term is strictly positive. Hence $B(u)$ is strictly positive. 
	
	{\bf Case 2:} $m< l\leq 2m-2$.
	
	In this case, to see $B(u)$ positive, we have to rewrite it as
	\begin{align*}
		B(u)=&l(2m-2-l)u^{t-3}A_2\\
		&+(l-m)(2m-2-l)u^{t-4}\left(u^2A_3-A(u)^2\right)\\
		&+\left(2l^2-(3m-2)l+2m^2-2m\right)u^{t-2}A_3.
	\end{align*}
	With help of Lemma \ref{lem: A(u)^2-uA_3}, we find that the first and second terms of the right side are non-negative. 
	It is not hard to check that $$2l^2-(3m-2)l+2m^2-2m>0$$ for $m<l\leq 2m-2$. Making use of  this fact, the third term of the right side is strictly positive.  Thus we show that $B(u)$ is strictly positive.

	{\bf Case 3:} $l>2m-2$.
	
	We rewrite $B(u)$ as follows 
	\begin{align*}
		B(u)=&l(l-2m+2)u^{t-4}\left(u^2A_3+A(u)^2-uA_2\right)\\
		&+m(l-2m+2)u^{t-4}\left(u^2A_3-A(u)^2\right)\\
		&+\left((m-2)l+2m^2-2m\right)u^{t-2}A_3.
	\end{align*}
	Since the coefficient of each term is positive, Lemmas \ref{lem:P_1/P_2} and \ref{lem: A(u)^2-uA_3} imply that the first two terms are strictly positive, clearly so is the third term. That is enough to see that $B(u)$ is strictly positive.

	The proof is complete.

\end{document}